\renewcommand\theequation{\thesection.\@arabic\c@equation}
\newcommand{\RR}{\mathbb{R}}
\newtheorem{theorem}{Theorem}[section]
\newtheorem{lemma}[theorem]{Lemma}
\newtheorem{proposition}[theorem]{Proposition}
\newtheorem{corollary}[theorem]{Corollary}
\theoremstyle{remark}
\theoremstyle{definition}
\newtheorem{definition}{Definition}[section]
\numberwithin{equation}{section}
\newcommand{\R}{\mathbf{R}}
\newcommand{\Ll}{{\mathcal L}}
\begin{document}

\title[Hardy-Sobolev]{Dynamics of dissipative solutions to the Hardy-Sobolev parabolic equation}

\author[M. Ikeda]{Masahiro Ikeda}
\address[M. Ikeda]{Graduate School of Information Science and Technology, The University of Osaka, 1–5 Yamadaoka, Suita, Osaka 565–0871, Japan}
\email{ikeda@ist.osaka-u.ac.jp}

\author[C. J. Niche]{C\'esar J. Niche}
\address[C.J. Niche]{Departamento de Matem\'atica Aplicada, Instituto de Matem\'atica. Universidade Federal do Rio de Janeiro, CEP 21941-909, Rio de Janeiro - RJ, Brazil}
\email{cniche@im.ufrj.br}

\author[G. Planas]{Gabriela Planas}
\address[G. Planas]{Departamento de Matem\'atica, Instituto de Matem\'atica, Estat\'{\i}stica e
Computa\c{c}\~ao Cient\'{\i}fica, Universidade Estadual de Campinas, Rua Sergio Buarque de Holanda, 651, 13083-859, Campinas - SP, Brazil}
\email{gplanas@unicamp.br}

\thanks{M. Ikeda has been supported by JSPS KAKENHI Grant Number JP 23K03174.  C.J. Niche acknowledges support from  PROEX - CAPES. G. Planas was partially supported by CNPq-Brazil grant 310274/2021-4, and FAPESP-Brazil grant 19/02512-5}

\keywords{}

\subjclass[2020]{}

\date{\today}

\begin{abstract}
 We study the long-time behaviour of solutions to the Hardy-Sobolev parabolic equation in critical function spaces for any spatial dimension $ d \geq 5$. By employing the Fourier splitting method, we establish precise decay rates for dissipative solutions, meaning those whose critical norm vanishes as time approaches infinity. Our findings offer a deeper understanding of the asymptotic properties and dissipation mechanisms governing this equation.
\end{abstract}

\maketitle

\section{Introduction}

In this article, we study the long-time behaviour of solutions to the  Hardy-Sobolev parabolic equation on $\mathbb R^d$,
\begin{align}
\label{eq:HS}
\begin{cases}
\partial _t u    =  \Delta u + |x|^{-\gamma}|u|^{2^*(\gamma) - 2} u,&(x,t)\in \mathbb{R}^d\times (0,\infty),\\
u(x,0) = u_0 (x) \in \dot{H}^1 (\mathbb R^d),
\end{cases}
\end{align}
where $d \geq 3$, $ 0 < \gamma < 2$ and  $2^*(\gamma) =\frac{2(d-\gamma)}{d-2}$ is the critical Hardy-Sobolev exponent.
The energy space $\dot{H}^1 (\mathbb R^d)$ is defined by
\[\dot{H}^1 (\mathbb R^d) = \Bigl\{ f \in L^{q_c} (\mathbb{R}^d) : \| \nabla f \| _{L^2(\mathbb{R}^d)} < \infty \Bigr\}\]
where $ q_c = \frac{2d}{d-2} $ is the critical Sobolev exponent,   for which we have the embedding $ \dot{H}^1 (\mathbb R^d) \subset L^{q_c} (\mathbb R^d)
$. Equation \eqref{eq:HS} is energy-critical because the energy
\begin{equation}
\label{eq:energy}
E_\gamma( u(t)) = \frac{1}{2} \int _{\RR^d}  |\nabla u (t) |^2  \, dx - \frac{1}{2^*(\gamma)} \int _{\RR^d}  \frac{|u (t)|^{2^*(\gamma)}}{|x|^\gamma} \, dx
\end{equation}
is invariant under the natural scaling
\begin{equation}
\label{eq:scaling}
u _{\lambda} (x,t) = \lambda ^{\frac{2-\gamma}{2^*(\gamma) - 2 }} u (\lambda x, \lambda^2 t) = \lambda ^{\frac{d-2}{2}} u (\lambda x, \lambda^2 t), \quad \lambda > 0.
\end{equation}
Notice that both terms  in \eqref{eq:energy} are invariant under \eqref{eq:scaling}.

Equation \eqref{eq:HS} has attracted attention due to the rich dynamical behaviour arising from its singular nonlinear term. Its stationary version

\begin{equation}
\label{eqn:stationary}
- \Delta U = |x|^{-\gamma}|U|^{2^*(\gamma) - 2} U, \qquad x\in \mathbb{R}^d,
\end{equation}
was studied, amongst others, by H\'enon \cite{henon1973numerical}, who proposed this equation as a model for rotating stellar systems and by Ghoussoub and Moradifam \cite{MR3052352}, who studied it through variational methods. It has been known since the seminal work of Kenig and Merle \cite{MR2257393,MR2461508} for the nonlinear Schr\"odinger and wave equations, that ground states, i.e. stationary solutions in energy-critical spaces, lead to a dynamical dichotomy: those solutions whose initial datum is, in some sense, ``below'' this ground state blowup in finite or infinite time, while those with initial datum ``above'' the ground state are global and scatter. This was proved for \eqref{eq:HS} by Chikami, Ikeda and Taniguchi \cite{MR4331259}, who showed that for initial data with energy \eqref{eq:energy} less or equal than that of the ground state and negative Nehari functional \eqref{eqn:nehari-functional}, the critical $\dot{H}^1$ norm of the solution blows up in finite or grows up in infinite time (see \eqref{eqn:growup}), while if the Nehari functional is positive, the solution is dissipative, this is, global in time and such that the critical $\dot{H}^1$ norm tends to zero, see Theorem \ref{thm:global-dynamics}.

Understanding how quickly solutions decay as time approaches infinity is essential for capturing how systems stabilise, how rapidly perturbations vanish, and whether the solutions efficiently reach equilibrium. This understanding provides a link between transient dynamics and the system's long-term behaviour.
Our main goal in this article is to provide decay rates for the $\dot{H}^1$ norm of these dissipative solutions. To achieve this, we use the decay character, introduced by Bjorland and Schonbek \cite{MR2493562}. Roughly speaking, for $u_0 \in L^2 (\mathbb{R}^n)$, its decay character is $r^{\ast} = r^{\ast} (u_0)$ such that $|\widehat{u_0} (\xi)| \approx |\xi|^{r^{\ast}}$, for $|\xi| \approx 0$. This quantity characterizes the decay of dissipative linear equations with data $u_0$, see Section \ref{DecayCharacter} for details.

 We now state the main result in this article.

\begin{theorem}
\label{thm:decay} Let $d \geq 5$, $u$ be a dissipative solution of \eqref{eq:HS} and $q^{\ast} = r^{\ast} \left( \Lambda u_0\right) > -\frac{d}{2}$,  where $\Lambda = (- \Delta) ^{1/2}$.  Then  we have
\[
\Vert u (t) \Vert ^2 _{\dot{H} ^1} \leq \left\{ \begin{array}{ll}C (1+t) ^{- \min \left\{ \frac{d}{2} + q^{\ast}, 1 \right\}} , & d \leq 10 - 4 \gamma,\\
C [\ln (e+t)]^{-2}, \, & d > 10 - 4 \gamma,
 \end{array}\right.
\]
for large enough $t$.
\end{theorem}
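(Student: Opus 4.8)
The proof will follow the Fourier splitting method of Schonbek, adapted to the energy-critical Hardy–Sobolev setting and to the $\dot H^1$ norm rather than the $L^2$ norm. Applying $\Lambda = (-\Delta)^{1/2}$ to \eqref{eq:HS}, the quantity $v = \Lambda u$ satisfies a heat equation with forcing $\Lambda\bigl(|x|^{-\gamma}|u|^{2^*(\gamma)-2}u\bigr)$, and $\|u(t)\|_{\dot H^1}^2 = \|v(t)\|_{L^2}^2$. I would work with the differential inequality for $\|v(t)\|_{L^2}^2$ obtained by testing this equation against $v$: the dissipation term $-\|\nabla v\|_{L^2}^2 = -\|u\|_{\dot H^2}^2$ appears with a good sign, and the nonlinear contribution must be controlled. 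The crucial input is that $u$ is \emph{dissipative}, so $\|u(t)\|_{\dot H^1}\to 0$; this lets me absorb the nonlinear term into the dissipation for $t$ large. Concretely, I expect an estimate of the form
\[
\frac{d}{dt}\|v(t)\|_{L^2}^2 \le -\|\nabla v(t)\|_{L^2}^2 + C\,\eta(t)\,\|v(t)\|_{L^2}^2 \cdot(\text{something}),
\]
where the Hardy–Sobolev inequality, Sobolev embedding, and interpolation convert $\||x|^{-\gamma/2}u\|$-type factors into powers of $\|u\|_{\dot H^1}$ and $\|u\|_{\dot H^2}$, with $\eta(t)=\|u(t)\|_{\dot H^1}^{2^*(\gamma)-2}\to 0$. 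The dimensional restrictions $d\le 10-4\gamma$ versus $d>10-4\gamma$ almost certainly arise here from how many powers of $\|u\|_{\dot H^2}=\|\nabla v\|_{L^2}$ the nonlinear term carries after interpolation — whether it is at most quadratic (absorbable) or genuinely superquadratic.

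**The Fourier splitting step.** Once I have a differential inequality of the form $\frac{d}{dt}\|v\|_{L^2}^2 \le -\|\nabla v\|_{L^2}^2 + (\text{error})$, I split the Fourier integral $\|\nabla v\|_{L^2}^2 = \int |\xi|^2|\widehat v(\xi)|^2\,d\xi$ over $\{|\xi|\le g(t)\}$ and its complement, with a radius $g(t)^2 \sim \beta/(1+t)$ to be chosen. On the high-frequency part one gets $-g(t)^2\bigl(\|v\|_{L^2}^2 - \int_{|\xi|\le g(t)}|\widehat v|^2\bigr)$, producing the key term $-\frac{\beta}{1+t}\|v\|_{L^2}^2$ after multiplying by the integrating factor $(1+t)^\beta$. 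The remaining low-frequency piece $\int_{|\xi|\le g(t)}|\widehat v(\xi)|^2\,d\xi$ is where the decay character $q^* = r^*(\Lambda u_0)$ enters: using the characterization from Section~\ref{DecayCharacter} (following Bjorland–Schonbek and the later refinements), the low-frequency mass of $\widehat v$ behaves like that of the linear heat evolution of $\Lambda u_0$ plus a Duhamel correction, giving $\int_{|\xi|\le g(t)}|\widehat v(\xi)|^2\,d\xi \lesssim (1+t)^{-(\frac d2 + q^*)}$ up to the contribution of the forcing term, provided $q^* > -d/2$ so this integral converges. The competition between the exponent $\frac d2+q^*$ coming from the data and the exponent $1$ coming from the Fourier-splitting mechanism itself is exactly what produces $\min\{\frac d2+q^*,1\}$ in the first case.

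**The logarithmic case and closing the argument.** When $d>10-4\gamma$ the nonlinear term is too strong to be simply absorbed, and the Duhamel correction to the low-frequency mass no longer decays as fast as $(1+t)^{-(\frac d2+q^*)}$ or $(1+t)^{-1}$; instead one is forced into a Gronwall-type feedback where the best self-consistent decay is only logarithmic. I would set up a bootstrap: assume $\|v(t)\|_{L^2}^2 \le \phi(t)$ for a trial function $\phi$, feed this into the error term through the nonlinearity (using again $\|u\|_{\dot H^1}\to 0$ to gain smallness), and determine the slowest $\phi$ consistent with the resulting integral inequality; the balance $\int^t (1+s)^{-1}\phi(s)^{\theta}\,ds$ against $\phi(t)$ for the relevant power $\theta$ forces $\phi(t)\sim[\ln(e+t)]^{-2}$. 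The main obstacle, and the part requiring the most care, is precisely this nonlinear absorption/feedback analysis: getting clean interpolation inequalities that express $\Lambda(|x|^{-\gamma}|u|^{2^*(\gamma)-2}u)$ tested against $\Lambda u$ in terms of $\|u\|_{\dot H^1}$, $\|u\|_{\dot H^2}$ with the correct exponents (so that the threshold $10-4\gamma$ emerges naturally), and then verifying that the dissipativity hypothesis supplies enough decay of the prefactor $\eta(t)$ to start the Fourier-splitting iteration. A secondary technical point is justifying the decay-character machinery for $v=\Lambda u$ rather than for a solution of a pure linear equation — i.e., controlling the Duhamel term's low-frequency behaviour — which I would handle by the standard decomposition into linear evolution plus forcing and estimating each separately.
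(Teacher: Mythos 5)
Your overall strategy --- Fourier splitting for $v=\Lambda u$, the decay character of $\Lambda u_0$ controlling the linear low--frequency mass, absorption of the nonlinearity into the dissipation via $\|u(t)\|_{\dot H^1}\to 0$, and a Gronwall bootstrap --- is the same as the paper's. However, there is a genuine gap in where you locate the mechanism producing the threshold $d\lessgtr 10-4\gamma$. You assert that it comes from whether the nonlinear term tested against $\Lambda u$ is ``at most quadratic (absorbable) or genuinely superquadratic'' in $\|\nabla v\|_{L^2}$, and later that for $d>10-4\gamma$ ``the nonlinear term is too strong to be simply absorbed.'' That is not what happens: the paper proves $\langle \Lambda u,\Lambda(|x|^{-\gamma}|u|^{2^*(\gamma)-2}u)\rangle_{L^2}\le C\|u\|_{\dot H^1}^{2^*(\gamma)-2}\|\nabla u\|_{\dot H^1}^2$, which is exactly quadratic in the dissipation with a prefactor that is small by dissipativity, so the differential inequality $\frac{d}{dt}\|u\|_{\dot H^1}^2\le-\widetilde C\|\nabla u\|_{\dot H^1}^2$ holds for \emph{all} $d\ge 5$ and all $0<\gamma<2$. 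The threshold instead appears much later, in the Duhamel contribution to the low--frequency integral, which carries a factor $\int_0^t\|u(s)\|_{\dot H^1}^{2(2^*(\gamma)-1)}\,ds$. Writing $2(2^*(\gamma)-1)=2+4\tfrac{2-\gamma}{d-2}$ and inserting the preliminary logarithmic decay converts the excess power into $[\ln(e+s)]^{-4\frac{2-\gamma}{d-2}}$, and the Gronwall iteration upgrades the decay to $(1+t)^{-\min\{\frac d2+q^*,1\}}$ precisely when $4\tfrac{2-\gamma}{d-2}\ge 1$, i.e. $d\le 10-4\gamma$; otherwise the bootstrap yields no improvement. Relatedly, the rate $[\ln(e+t)]^{-2}$ is not the fixed point of a self-consistent feedback as you suggest: it is the preliminary decay valid for every $d\ge5$, obtained by taking $g(t)=[\ln(e+t)]^k$ and optimizing a H\"older exponent ($2\alpha=d$) in the estimate of the nonlinear Duhamel term, using only the uniform bound on $\|u(t)\|_{\dot H^1}$.

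Beyond this misattribution, the proposal omits the technical core that the paper identifies as the main difficulty: handling the singular weight $|x|^{-\gamma}$. The quadratic absorption estimate requires the generalized H\"older inequality in Lorentz spaces together with the Hardy--Sobolev inequality \emph{and} the Rellich inequality (to control $\||x|^{-2}u\|_{L^2}$ by $\|\Delta u\|_{L^2}$), and the latter is the reason for the restriction $d\ge5$, which your outline never explains. Likewise, the low--frequency Duhamel estimate needs the weighted heat--semigroup bounds in Lorentz spaces (Proposition~\ref{prop:linear-main} and Corollary~\ref{cor:linear-main}) rather than standard Sobolev embedding and interpolation. Finally, one must first establish that $\|u(t)\|_{\dot H^1}^2$ is a Lyapunov function (hence differentiable a.e.) before writing the differential inequality; your plan takes this for granted. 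As it stands the proposal is a plausible roadmap, but the steps it leaves unproven are exactly the ones where the hypotheses $d\ge5$ and $d\lessgtr 10-4\gamma$ enter.
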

 The estimates in Theorem \ref{thm:decay} extend results already available for the energy-critical nonlinear heat equation, i.e. equation \eqref{eq:HS} with  $\gamma = 0$, obtained by Kosloff, Niche and Planas \cite{MR4743818} for $d = 4$ and Ikeda, Niche and Planas \cite{ikeda2025globaldynamicsenergycriticalnonlinear} for $d \geq 3$. Due to the fact that we use the Rellich inequality to show that the critical norm is a Lyapunov function, we are restricted to $d \geq 5$. The singularity at $x = 0$ in the nonlinear term forces us to make some significant modifications in the proof, when compared to the case $\gamma = 0$. These are implemented through delicate estimates in Lorentz spaces. For results on the dynamics of this equation, see Chikami, Ikeda and Taniguchi \cite{CHIKAMI2022112931}, Chikami, Ikeda, Taniguchi and Tayachi \cite{MR4803464}, Hisa and Sierzega \cite{Hisa2024},  Hisa and Takahashi \cite{HISA2021822}, Ishiwata, Ruf, Sani and Terraneo \cite{10.57262/ade030-0304-141}, and references therein.

 This article is organized as follows. In Section \ref{section-technical-results} we state results and provide definitions we need for showing, in Section \ref{section-lyapunov-function},  that the critical energy is a Lyapunov function, and for proving, in Section \ref{section-proof}, our main result Theorem \ref{thm:decay}. Finally, in Appendix \ref{appendix-technical} we give the complete statement of the result concerning the existence of solutions to \eqref{eq:HS} from Chikami, Ikeda and Taniguchi \cite{MR4331259}.

\section{Technical results}

\label{section-technical-results}

\subsection{Dichotomy}
\label{dichotomy}
 We consider the integral form of problem \eqref{eq:HS}
\begin{equation}\label{integral-equation}
	u(x,t) =e^{t\Delta} u_0(x) + \int_{0}^t e^{(t-\tau)\Delta} \Bigl\{ |x|^{-\gamma} |u(x,\tau)|^{2^*(\gamma)-2} u(x,\tau)\Bigr\}\,d\tau,
\end{equation}
where
$\{e^{t\Delta}\}_{t>0}$ is the free heat semigroup, defined by
\[
	e^{t\Delta} f(x) = (G( \cdot,t) * f)(x) = \int_{\RR^d} G(x-y,t)f(y)\, dy,\quad x\in \RR^d,\ t>0,
\]
and $G:\mathbb{R}^d \times (0,\infty)\rightarrow (0,\infty)$ is the heat kernel, i.e.,
\[	G(x,t) = (4 \pi t)^{-\frac{d}{2}} e^{-\frac{|x|^2}{4t}},\quad   x\in \mathbb R^d,\ t>0 .
\]

A function $ u = u(x,t) $  is defined as a mild solution to \eqref{eq:HS} on $  \mathbb{R}^d \times [0, T)$ with initial data $ u_0 \in \dot{H}^1(\mathbb{R}^d) $ if $ u \in C([0, T^\prime]; \dot{H}^1(\mathbb{R}^d)) $ satisfies the integral equation \eqref{integral-equation} for any $ T^\prime \in (0, T) $, where $ T \in (0, \infty]$. If $ T < \infty $, the solution \( u \) is called local in time. The maximal existence time of the solution with initial data $ u_0 $ is denoted by $ T_m = T_m(u_0)$.
The solution $ u $ is called global in time if $ T_m = +\infty $, and it is said to blow up in finite time if $ T_m < +\infty $. Furthermore, $ u $ is said to be dissipative if $T_m = +\infty $ and
\[
\lim_{t \to \infty} \|u(t)\|_{\dot{H}^1} = 0.
\]
On the other hand, $ u $ is said to grow up at infinite time if $T_m = +\infty$ and
\begin{equation}
\label{eqn:growup}
\limsup_{t \to \infty} \|u(t)\|_{\dot{H}^1} = +\infty.
\end{equation}

This problem is well-posed in the Hadamard sense, as demonstrated by Chikami, Ikeda, and Taniguchi \cite{MR4331259}. More precisely, for any initial data 
$ u_0 \in \dot{H}^1(\mathbb{R}^d)$ there exists a maximal existence time $ T_m = T_m(u_0)$ 
 such that \eqref{eq:HS} has a unique mild solution on 
$[0,T_m)$. Furthermore, the solution depends continuously on the initial data. Under certain conditions, such as when 
$\|u_0\|_{\dot{H}^1} $
  is sufficiently small, the solution extends globally in time. For a complete statement of this result, refer to the Appendix \ref{appendix-technical}.

The Nehari functional $J_\lambda:\dot{H}^1(\mathbb{R}^d)\rightarrow \mathbb{R}$ is given by 
\begin{equation}
\label{eqn:nehari-functional}
	J_\lambda(f) = \frac{d}{d\lambda} E \left( \lambda f \right) \Big|_{\lambda=1}
	= \|f\|_{\dot{H}^1}^2
	- \int_{\mathbb{R}^d} \frac{|f(x)|^{2^*(\gamma)}}{|x|^\gamma}dx, \ \ \ f\in \dot{H}^1(\mathbb{R}^d).
\end{equation}
and the Nehari manifold is defined by
\[
\mathcal{N}_\gamma:=\left\{\phi \in\dot{H}^1\left(\mathbb{R}^d\right) \backslash\{0\} ; J_\gamma(\phi)=0\right\}.
\]
 Then, the mountain pass energy $l_{\mathrm{HS}}$ is given by
\[
l_{\mathrm{HS}}:=\inf _{\phi \in \dot{H}^1\left(\mathbb{R}^d\right) \backslash\{0\}} \max _{\lambda \geq 0} E(\lambda \phi)=\inf _{\phi \in \mathcal{N}_\gamma} E_\gamma(\phi) .
\]
 The function
\begin{equation}\label{eqn:ground-state}
W_\gamma(x):=((d-\gamma)(d-2))^{\frac{d-2}{2(2-\gamma)}}\left(1+|x|^{2-\gamma}\right)^{-\frac{d-2}{2-\gamma}}
\end{equation}
is a ground state of \eqref{eq:HS}, i.e. a solution to the stationary problem \eqref{eqn:stationary}. By invariance of \eqref{eq:HS}, its scaling and rotation
\[
\mathrm{e}^{\mathrm{i} \theta_0} \lambda_0^{\frac{d-2}{2}} W\left(\lambda_0 x\right), \quad \lambda_0>0, \quad \theta_0 \in \mathbb{R}
\]
is also a ground state of \eqref{eq:HS}.
The $l_{\mathrm{HS}}$ coincides with the energy $E_\gamma\left(W_\gamma\right)$ of the ground state.

A necessary and sufficient condition on initial data at or below the ground state that dichotomizes the behavior of solutions was established by Chikami, Ikeda, and Taniguchi \cite[Thm 1.1]{MR4331259}.

\begin{theorem} \label{thm:global-dynamics}
 Let $d \geq 3,\ 0<\gamma<2$, and $u=u(t)$ be a solution to \eqref{eq:HS} with initial data $u_0 \in \dot{H}^1(\mathbb{R}^d)$. Assume $E_\gamma\left(u_0\right) \leqslant l_{\mathrm{HS}}$. Then, the following statements hold:
 \begin{itemize}
\item[(a)] If $J_\gamma\left(u_0\right)>0$, then $u$ is dissipative.
\item[(b)] If $J_\gamma\left(u_0\right)<0$, then $u$ blows up in finite time or grows up at infinite time. Furthermore, if $u_0 \in L^2(\mathbb{R}^d)$ is also satisfied, then $u$ blows up in finite time.
    \end{itemize}
\end{theorem}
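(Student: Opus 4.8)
The plan is to establish Theorem \ref{thm:global-dynamics} by the potential-well (Payne–Sattinger) method adapted to the energy-critical Hardy–Sobolev flow, with the energy $E_\gamma$ playing the role of a Lyapunov functional. Two structural facts would be set up first. The energy dissipation identity
\[
E_\gamma(u(t_2)) - E_\gamma(u(t_1)) = - \int_{t_1}^{t_2} \| \partial_t u(s) \|_{L^2}^2 \, ds \leq 0,
\]
obtained by testing \eqref{eq:HS} against $\partial_t u$ (legitimate since mild solutions are smooth for $t>0$ by parabolic regularity), shows that $t \mapsto E_\gamma(u(t))$ is nonincreasing. The algebraic identities
\[
E_\gamma(\phi) = \Bigl( \tfrac{1}{2} - \tfrac{1}{2^*(\gamma)} \Bigr) \| \phi \|_{\dot{H}^1}^2 + \tfrac{1}{2^*(\gamma)} J_\gamma(\phi), \qquad J_\gamma(\phi) = 2^*(\gamma) E_\gamma(\phi) - \tfrac{2^*(\gamma) - 2}{2} \| \phi \|_{\dot{H}^1}^2,
\]
combined with the sharp Hardy–Sobolev inequality $\int |x|^{-\gamma} |\phi|^{2^*(\gamma)} \, dx \leq C_{\mathrm{HS}} \| \phi \|_{\dot{H}^1}^{2^*(\gamma)}$ (extremiser $W_\gamma$), give $l_{\mathrm{HS}} = (\tfrac{1}{2} - \tfrac{1}{2^*(\gamma)}) \| W_\gamma \|_{\dot{H}^1}^2$ and the threshold relation $C_{\mathrm{HS}} \| W_\gamma \|_{\dot{H}^1}^{2^*(\gamma) - 2} = 1$. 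Consequently $\| \phi \|_{\dot{H}^1} < \| W_\gamma \|_{\dot{H}^1}$ whenever $J_\gamma(\phi) \geq 0$ and $E_\gamma(\phi) < l_{\mathrm{HS}}$, while $\| \phi \|_{\dot{H}^1} > \| W_\gamma \|_{\dot{H}^1}$ whenever $J_\gamma(\phi) < 0$.

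Next I would show the sets $\mathcal{K}^{\pm} = \{ \phi : E_\gamma(\phi) \leq l_{\mathrm{HS}},\ \pm J_\gamma(\phi) > 0 \}$ are invariant under the flow. By continuity of $t \mapsto J_\gamma(u(t))$ on $\dot{H}^1$, if a trajectory starting in $\mathcal{K}^{+}$ first reached $J_\gamma(u(t_*)) = 0$ at a nonzero state, then $u(t_*) \in \mathcal{N}_\gamma$ would force $E_\gamma(u(t_*)) \geq l_{\mathrm{HS}}$; together with monotonicity and $E_\gamma(u_0) \leq l_{\mathrm{HS}}$ this makes $E_\gamma$ constant, so $\partial_t u \equiv 0$ and $u$ is stationary with $J_\gamma = 0$, contradicting $J_\gamma(u_0) > 0$. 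The identical argument applies to $\mathcal{K}^{-}$, where the strict lower bound $\| u(t) \|_{\dot{H}^1} > \| W_\gamma \|_{\dot{H}^1}$ additionally prevents the trajectory from reaching $0$. In the borderline case $E_\gamma(u_0) = l_{\mathrm{HS}}$ with $J_\gamma(u_0) \neq 0$, the datum is not stationary, so strict dissipation yields $E_\gamma(u(t)) < l_{\mathrm{HS}}$ for $t > 0$, placing the trajectory in the strictly sub-threshold regime.

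For part (a), invariance and the first identity give the coercive bound $(\tfrac{1}{2} - \tfrac{1}{2^*(\gamma)}) \| u(t) \|_{\dot{H}^1}^2 \leq E_\gamma(u(t)) \leq E_\gamma(u_0)$, so the critical norm stays bounded and the blow-up alternative from the local theory (Appendix \ref{appendix-technical}) forces $T_m = +\infty$. To reach $\| u(t) \|_{\dot{H}^1} \to 0$ I would identify the $\omega$-limit set of the trajectory in $\dot{H}^1$ as consisting of steady states of \eqref{eqn:stationary} with $J_\gamma \geq 0$ and energy at most $E_\infty := \lim_{t} E_\gamma(u(t)) < l_{\mathrm{HS}}$; since every nonzero steady state lies on $\mathcal{N}_\gamma$ with energy $\geq l_{\mathrm{HS}}$, the $\omega$-limit set is $\{0\}$, whence $E_\infty = 0$ and the coercive bound yields decay. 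For part (b), on $\mathcal{K}^{-}$ the second identity together with $E_\gamma(u(t)) \leq E_\gamma(u(t_0)) < l_{\mathrm{HS}}$ and $\| u(t) \|_{\dot{H}^1}^2 \geq \| W_\gamma \|_{\dot{H}^1}^2$ produces the uniform gap $-J_\gamma(u(t)) \geq 2^*(\gamma)\bigl(l_{\mathrm{HS}} - E_\gamma(u(t_0))\bigr) =: c_0 > 0$. When $u_0 \in L^2$, set $M(t) = \tfrac{1}{2} \| u(t) \|_{L^2}^2$, so that $M'(t) = - J_\gamma(u(t)) \geq c_0 > 0$; the classical concavity (Levine) argument — a differential inequality $M M'' - (1 + \beta) (M')^2 \geq 0$ driven by the signs $E_\gamma \leq l_{\mathrm{HS}}$ and $J_\gamma < 0$ — then shows $M^{-\beta}$ is positive, concave and decreasing, hence vanishes in finite time, forcing $T_m < \infty$. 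Without the $L^2$ hypothesis I would argue by contradiction: a global solution with bounded critical norm would, through energy dissipation and an $\omega$-limit argument, yield a nonzero steady state in $\overline{\mathcal{K}^{-}}$, impossible since steady states satisfy $J_\gamma = 0$ while the gap gives $J_\gamma \leq -c_0$; thus either $T_m < \infty$ or $\limsup_{t} \| u(t) \|_{\dot{H}^1} = +\infty$.

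The main obstacle, shared by the decay in (a) and the grow-up dichotomy in (b), is the loss of compactness at the \emph{critical} Hardy–Sobolev exponent when passing to the limit along a near-stationary sequence $u(t_n)$ with $\| \partial_t u(t_n) \|_{L^2} \to 0$: one must upgrade weak $\dot{H}^1$ convergence to strong convergence of the nonlinear term and identify the limit as a genuine steady state of \eqref{eqn:stationary}. The decisive input is that the energy stays \emph{strictly} below $l_{\mathrm{HS}}$, which — via a profile decomposition, each nontrivial bubble carrying energy at least $l_{\mathrm{HS}}$ — excludes concentration and restores compactness. Carrying this out in the Hardy–Sobolev setting, where the weight $|x|^{-\gamma}$ breaks translation invariance and must be handled by Lorentz-space estimates, is where the delicate work lies.
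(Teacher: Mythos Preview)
The paper does not contain a proof of Theorem~\ref{thm:global-dynamics}. This result is quoted in Section~\ref{dichotomy} as a known fact, with the explicit attribution ``was established by Chikami, Ikeda, and Taniguchi \cite[Thm~1.1]{MR4331259}''; it is used as input for the decay analysis (Theorem~\ref{thm:decay}), which is the actual contribution of the present article. Consequently there is no ``paper's own proof'' against which your proposal can be compared.

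That said, your sketch follows the standard Payne--Sattinger/Kenig--Merle framework and is broadly the right architecture for such a result. Two points are worth flagging if you intend to turn this into a full proof. First, the blow-up criterion in Proposition~\ref{prop:wellposed1}(iv) is stated in terms of the Kato-type norm $\|u\|_{\mathcal{K}^q(T_m)}$, not the $\dot H^1$ norm, so the step ``bounded critical norm $\Rightarrow T_m=+\infty$'' requires an additional argument linking the two; in the source paper this is typically handled by showing the trajectory eventually enters the small-data regime of item~(v), after which dissipation follows directly, rather than via an $\omega$-limit/profile-decomposition argument. Second, your compactness discussion correctly identifies the critical-exponent difficulty, but for part~(a) the sub-threshold energy and the small-data global theory already give a more direct route to $\|u(t)\|_{\dot H^1}\to 0$, avoiding the profile decomposition altogether.
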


\subsection{Hardy-Sobolev and Rellich inequalities}

The Hardy-Sobolev inequality plays a crucial role throughout this paper.

\begin{lemma}[Thm 15.1.1 and Thm 15.2.2,  Ghoussub and Moradifam \cite{MR3052352}]  \label{lem:HS-ineq}  Let $d \geq 3$ and $ 0 \leqslant \gamma \leq 2$. Then, the inequality
\[
\left(\int_{\mathbb{R}^d} \frac{|f(x)|^{2^*(\gamma)}}{|x|^\gamma} \mathrm{d} x\right)^{\frac{1}{2 *(\gamma)}} \leq C_{\mathrm{HS}}\left(\int_{\mathbb{R}^d}|\nabla f(x)|^2 \mathrm{~d} x\right)^{\frac{1}{2}}
\]
holds for any $f \in \dot{H}^1(\mathbb{R}^d)$, where $C_{\mathrm{HS}}=C_{\mathrm{HS}}(d, \gamma)$ is the best constant which is attained  by the extremal $W_\gamma$ given in \eqref{eqn:ground-state}.
\end{lemma}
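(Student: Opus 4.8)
The plan is to prove the inequality first and then, separately, to establish attainment and to identify the extremal, reducing in both parts to radially symmetric functions by symmetrization. For the inequality with \emph{some} finite constant, the quickest route is to recognize it as the special case $a=0$ of the Caffarelli--Kohn--Nirenberg inequality: taking exponent $q = 2^*(\gamma)$ and weight exponent $b = \gamma/2^*(\gamma)$, one checks by testing the dilation $f \mapsto f(\lambda\,\cdot)$ that both sides scale like $\lambda^{1-d/2}$, which is exactly the CKN balance condition with no weight on the gradient. This gives finiteness of $C_{\mathrm{HS}}$. However, to locate the best constant and its extremal one needs the symmetrization argument below, which also yields a self-contained proof of the inequality.

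The reduction proceeds by Schwarz symmetrization. Given $f \in \dot H^1(\mathbb{R}^d)$, let $f^*$ be its symmetric decreasing rearrangement. The P\'olya--Szeg\H{o} inequality gives $\|\nabla f^*\|_{L^2} \le \|\nabla f\|_{L^2}$, so the right-hand side does not increase. Since $|x|^{-\gamma}$ is itself a radially symmetric nonincreasing weight and $\bigl(|f|^{2^*(\gamma)}\bigr)^* = (f^*)^{2^*(\gamma)}$, the Hardy--Littlewood inequality applied to the product yields
\[
\int_{\mathbb{R}^d} \frac{|f(x)|^{2^*(\gamma)}}{|x|^\gamma}\,dx \le \int_{\mathbb{R}^d} \frac{\bigl(f^*(x)\bigr)^{2^*(\gamma)}}{|x|^\gamma}\,dx ,
\]
so the left-hand side does not decrease. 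Hence the Rayleigh-type quotient defining $C_{\mathrm{HS}}$ is not decreased by symmetrization, and it suffices to prove the inequality and to find its extremals among nonnegative radially decreasing functions. For such functions the problem collapses, in polar coordinates, to a one-dimensional weighted inequality; the Emden--Fowler substitution $r = e^{t}$, $f(r) = r^{-(d-2)/2}v(t)$ converts the critical scaling invariance \eqref{eq:scaling} into translation invariance in $t$ and makes the extremal structure transparent.

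To show that the best constant is attained, I would run the direct method on a minimizing sequence for the quotient, normalized by the scaling \eqref{eq:scaling} so that $\|\nabla f_n\|_{L^2}=1$. The point is that for $\gamma>0$ the singular weight, together with the restriction to the radial class, restores the compactness that is otherwise lost at the critical exponent, modulo the scaling group; combined with having fixed the scale, this rules out vanishing and lets one extract a subsequence converging weakly in $\dot H^1$ to a nonzero minimizer $U$. Any such minimizer satisfies, up to normalization, the Euler--Lagrange equation, which is precisely the stationary problem \eqref{eqn:stationary}. Restricting to positive radial solutions turns this into a second-order ODE in $r$ that integrates explicitly, whose unique decaying solution (up to scaling) is the Bliss/Aubin--Talenti--type profile $W_\gamma$ of \eqref{eqn:ground-state}; substituting $W_\gamma$ back computes the exact value of $C_{\mathrm{HS}}$ and confirms attainment.

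The main obstacle is the compactness step. Because the functional is invariant under the critical scaling \eqref{eq:scaling}, a minimizing sequence may concentrate at the origin or spread to infinity and a weak limit could a priori vanish; the heart of the argument is a concentration-compactness analysis showing that this scaling group exhausts the only source of non-compactness, so that after normalizing the scale no mass is lost. A secondary subtlety is the rigidity needed to conclude that the extremal is $W_\gamma$ up to scaling rather than merely that radial minimizers exist: for $\gamma>0$ the weight $|x|^{-\gamma}$ pins the center of symmetry at the origin, so equality in the Hardy--Littlewood step forces $f$ to be radially decreasing about $0$, and the classification of radial solutions of \eqref{eqn:stationary} then yields uniqueness within the scaling family. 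I expect the concentration-compactness bookkeeping, rather than the ODE classification, to be the genuinely delicate part.
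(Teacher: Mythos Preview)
The paper does not prove this lemma at all: it is quoted verbatim from Ghoussoub--Moradifam \cite{MR3052352} (Theorems 15.1.1 and 15.2.2) and used as a black box throughout. So there is no ``paper's own proof'' to compare your proposal against.

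That said, your sketch follows the classical route to this inequality and is broadly sound. The symmetrization step (P\'olya--Szeg\H{o} plus Hardy--Littlewood against the radially decreasing weight $|x|^{-\gamma}$) is exactly how one reduces to the radial class, and the Emden--Fowler change of variables is the standard device for analyzing the resulting one-dimensional problem and classifying positive radial solutions of \eqref{eqn:stationary}. Your identification of concentration--compactness modulo the scaling group as the crux of the attainment argument is correct: for $0<\gamma<2$ the weight kills translation invariance, so after fixing the scale one can indeed extract a nontrivial weak limit. One point you should flag explicitly is the endpoint behaviour: at $\gamma=2$ the exponent $2^*(\gamma)$ equals $2$, the inequality becomes the classical Hardy inequality, and the best constant is \emph{not} attained in $\dot H^1(\mathbb{R}^d)$; the formula \eqref{eqn:ground-state} for $W_\gamma$ also degenerates there. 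Your compactness argument, as written, silently assumes $\gamma<2$, and the lemma's attainment claim should be read accordingly.
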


We will also use the following Rellich inequality.

\begin{lemma}[Rellich \cite{MR88624,MR240668}] \label{lem:Rellich-ineq} 
 Assume $d \geq 5$. There exists a constant $C>0$ such that
\[
\int_{\mathbb{R}^d} \frac{|f(x)|^{2}}{|x|^4} d x \leq \frac{16}{d^2(d-4)^2}\int_{\mathbb{R}^d}|\Delta f(x)|^2 d x, \quad \text { for all } f \in H^2(\mathbb{R}^d).
\]
\end{lemma}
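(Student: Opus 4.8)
The plan is to prove the inequality first for real-valued $f \in C_c^\infty(\mathbb{R}^d \setminus \{0\})$ and then extend to general $f \in H^2(\mathbb{R}^d)$ by density; the complex-valued case reduces to the real one by splitting $f$ into its real and imaginary parts, since $|\Delta f|^2 = |\Delta(\operatorname{Re} f)|^2 + |\Delta(\operatorname{Im} f)|^2$ and likewise for $|f|^2$. The heart of the argument is a single completion of the square: for a parameter $\alpha>0$ to be optimized at the end, I would start from the trivial bound
\[
0 \leq \int_{\mathbb{R}^d} \Bigl| \Delta f + \alpha \frac{f}{|x|^2}\Bigr|^2 dx = \int_{\mathbb{R}^d} |\Delta f|^2\, dx + 2\alpha \int_{\mathbb{R}^d} \frac{f\, \Delta f}{|x|^2}\, dx + \alpha^2 \int_{\mathbb{R}^d} \frac{f^2}{|x|^4}\, dx .
\]
The weight $|x|^{-2}$ is forced by scaling, as $\Delta f$ and $|x|^{-2}f$ share the same homogeneity.

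The next step is to compute the cross term by integration by parts. Writing $A = \int_{\mathbb{R}^d} |x|^{-4} f^2\, dx$ and $B = \int_{\mathbb{R}^d} |x|^{-2}|\nabla f|^2\, dx$, one integration by parts gives
\[
\int_{\mathbb{R}^d} \frac{f\, \Delta f}{|x|^2}\, dx = -\int_{\mathbb{R}^d} \nabla\Bigl(\frac{f}{|x|^2}\Bigr)\cdot \nabla f\, dx = -B + 2\int_{\mathbb{R}^d} \frac{f\,(x\cdot \nabla f)}{|x|^4}\, dx ,
\]
and the remaining integral is handled by rewriting $2f\,(x\cdot\nabla f) = x\cdot\nabla(f^2)$ and integrating by parts once more, using $\operatorname{div}(x\,|x|^{-4}) = (d-4)\,|x|^{-4}$, which yields $2\int_{\mathbb{R}^d} |x|^{-4} f\,(x\cdot\nabla f)\, dx = -(d-4)A$. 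Hence the cross term equals $-B-(d-4)A$.

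The final ingredient is the weighted Hardy inequality $B \geq \tfrac{(d-4)^2}{4}\, A$, valid precisely because $d \neq 4$; I would derive it in the same spirit, starting again from $\operatorname{div}(x\,|x|^{-4}) = (d-4)\,|x|^{-4}$ and applying Cauchy--Schwarz together with $|x\cdot\nabla f| \leq |x|\,|\nabla f|$. Substituting the cross-term identity into the square inequality and then inserting this lower bound for $B$ gives, for every $\alpha>0$,
\[
\int_{\mathbb{R}^d} |\Delta f|^2\, dx \geq 2\alpha B + 2\alpha(d-4)A - \alpha^2 A \geq \Bigl( \alpha\frac{d(d-4)}{2} - \alpha^2\Bigr) A .
\]
Maximizing the quadratic in $\alpha$ at $\alpha = d(d-4)/4$ produces the sharp constant $\tfrac{d^2(d-4)^2}{16}$, so that $\int_{\mathbb{R}^d} |\Delta f|^2\, dx \geq \tfrac{d^2(d-4)^2}{16}\, A$, which is exactly the claimed inequality.

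I expect the main obstacle to lie not in the algebra but in the approximation step. I would establish all identities on $C_c^\infty(\mathbb{R}^d\setminus\{0\})$, where the weighted integrals are finite and the boundary terms in the integrations by parts vanish. Passing to $f\in H^2(\mathbb{R}^d)$ then requires that $C_c^\infty(\mathbb{R}^d\setminus\{0\})$ be dense in $H^2(\mathbb{R}^d)$ and that $\int_{\mathbb{R}^d} |x|^{-4} f^2\, dx$ remain controlled along the approximating sequence; both hinge on $d \geq 5$, which guarantees that the origin carries no $H^2$-capacity and that the Hardy exponent $(d-4)/2$ is strictly positive, rendering the near-origin singularity harmless. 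The behaviour at spatial infinity is dealt with by a standard cutoff.
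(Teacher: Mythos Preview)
Your proof is correct. Note, however, that the paper does not actually supply its own proof of this lemma: it is quoted as a classical result with attribution to Rellich \cite{MR88624,MR240668}, so there is no in-paper argument to compare against. The completion-of-the-square derivation you outline --- expanding $\bigl\|\Delta f + \alpha\,|x|^{-2} f\bigr\|_{L^2}^2$, evaluating the cross term via two integrations by parts to get $-B-(d-4)A$, invoking the weighted Hardy inequality $B\geq \tfrac{(d-4)^2}{4}A$, and optimizing in $\alpha$ --- is the standard route and yields the sharp constant $d^2(d-4)^2/16$. Your treatment of the density step is also on target: the singleton $\{0\}$ has zero $H^2$-capacity precisely when $d\geq 5$, so $C_c^\infty(\RR^d\setminus\{0\})$ is dense in $H^2(\RR^d)$, and Fatou's lemma then transfers the inequality to the closure.
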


\subsection{Lorentz spaces}

We define the distribution function $d_f$ of a function $f$ by
\[
d_f(\lambda):=|\{x \in \mathbb{R}^d ;|f(x)|>\lambda\}|,
\]
where $|A|$ denotes the Lebesgue measure of a set $A$ and by $f^*$  the decreasing rearrangement of $f$ given by
\[
f^*(t):=\inf \left\{\lambda>0 ; d_f(\lambda) \leq t\right\}.
\]
For a $ f $  measurable function  define,  for $0<q, r \leq \infty$,
\[
\|f\|_{L^{q, r}}:= \begin{cases}\displaystyle\left(\int_0^{\infty}\left(t^{\frac{1}{q}} f^*(t)\right)^r \frac{d t}{t}\right)^{\frac{1}{r}} & \text { if } r<\infty \\ \displaystyle\sup _{t>0} t^{\frac{1}{q}} f^*(t) & \text { if } r=\infty.\end{cases}
\]
 The set of all $f$  with $ \|f\|_{L^{q, r}(\mathbb{R}^d)}<\infty $ denoted by $L^{q, r} (\mathbb{R}^d)$
 is called the Lorentz space with indices $  q$ and $r$,  see Grafakos \cite{MR2445437} and Lemarie-Rieusset \cite{MR1938147}  for properties of such spaces.

We now state a generalized H\"older inequality. 
\begin{lemma}
	\label{lem:Holder}
Let $0 < q, q_1,q_2 < \infty$ and $0<r,r_1,r_2 \le \infty$.
Then the following assertions hold:
\begin{itemize}
\item[(i)] If
\[
\frac{1}{q} = \frac{1}{q_1} + \frac{1}{q_2}\quad \text{and}\quad \frac{1}{r} \le \frac{1}{r_1} + \frac{1}{r_2},
\]
then
there exists a constant $C>0$ such that
\[
\|f g\|_{L^{q,r}}
\le C \|f\|_{L^{q_1,r_1}}\|g\|_{L^{q_2,r_2}}
\]
for any $f \in L^{q_1,r_1}(\mathbb R^d)$ and $g \in L^{q_2,r_2}(\mathbb R^d)$.

\item[(ii)]
There exists a constant $C>0$ such that
\[
\|f g\|_{L^{q,r}}
\le C \|f\|_{L^{q,r}}\|g\|_{L^{\infty}}
\]
for any $f \in L^{q,r}(\mathbb R^d)$ and $g \in L^{\infty}(\mathbb R^d)$.
\end{itemize}
\end{lemma}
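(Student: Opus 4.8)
The plan is to reduce both assertions to a single pointwise inequality between decreasing rearrangements and then transfer that inequality to the Lorentz quasi-norms through a scale-invariant H\"older inequality on the half-line.

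First I would establish the submultiplicativity of the rearrangement. Starting from the elementary set inclusion
\[
\{x : |f(x)g(x)| > \lambda_1\lambda_2\} \subseteq \{x : |f(x)| > \lambda_1\} \cup \{x : |g(x)| > \lambda_2\},
\]
valid for all $\lambda_1,\lambda_2 > 0$, I obtain the subadditivity of distribution functions $d_{fg}(\lambda_1\lambda_2) \le d_f(\lambda_1) + d_g(\lambda_2)$. Choosing $\lambda_1 = f^*(t_1)+\epsilon$ and $\lambda_2 = g^*(t_2)+\epsilon$, so that $d_f(\lambda_1)\le t_1$ and $d_g(\lambda_2)\le t_2$ hold by the definition of $f^*$, and then letting $\epsilon \downarrow 0$, I get
\[
(fg)^*(t_1+t_2) \le f^*(t_1)\,g^*(t_2), \qquad t_1,t_2 > 0,
\]
and in particular the key bound $(fg)^*(t) \le f^*(t/2)\,g^*(t/2)$ upon setting $t_1=t_2=t/2$.

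For part (i) I would then insert this into the definition of the Lorentz quasi-norm, splitting the weight by means of $\tfrac1q = \tfrac1{q_1}+\tfrac1{q_2}$:
\[
t^{1/q}(fg)^*(t) \le \bigl(t^{1/q_1}f^*(t/2)\bigr)\bigl(t^{1/q_2}g^*(t/2)\bigr) =: F(t)\,G(t).
\]
Defining $\tilde r$ by $\tfrac1{\tilde r} = \tfrac1{r_1}+\tfrac1{r_2}$, I would apply the generalized H\"older inequality on the measure space $\bigl((0,\infty),\tfrac{dt}{t}\bigr)$ to conclude $\|FG\|_{L^{\tilde r}(dt/t)} \le \|F\|_{L^{r_1}(dt/t)}\|G\|_{L^{r_2}(dt/t)}$; this holds with constant one for the full range $0 < r_1,r_2 \le \infty$, since $\tilde r \le r_1$ and $\tilde r \le r_2$ reduce it to the classical H\"older inequality with the conjugate exponents $r_1/\tilde r,\ r_2/\tilde r \ge 1$. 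Because $\tfrac{dt}{t}$ is invariant under the dilation $t\mapsto 2s$, the change of variables $t=2s$ gives $\|F\|_{L^{r_1}(dt/t)} = 2^{1/q_1}\|f\|_{L^{q_1,r_1}}$ and $\|G\|_{L^{r_2}(dt/t)} = 2^{1/q_2}\|g\|_{L^{q_2,r_2}}$, so that $\|fg\|_{L^{q,\tilde r}} \le 2^{1/q}\|f\|_{L^{q_1,r_1}}\|g\|_{L^{q_2,r_2}}$. Finally, since $\tfrac1r \le \tfrac1{\tilde r}$ forces $\tilde r \le r$, the standard continuous embedding $L^{q,\tilde r}(\mathbb{R}^d)\hookrightarrow L^{q,r}(\mathbb{R}^d)$ (monotonicity of Lorentz spaces in the second index, cf.\ Grafakos \cite{MR2445437}) upgrades the estimate to the claimed bound for $\|fg\|_{L^{q,r}}$. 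Part (ii) is then immediate: from $|f(x)g(x)| \le \|g\|_{L^\infty}|f(x)|$ a.e.\ and monotonicity of the rearrangement under pointwise domination, $(fg)^*(t) \le \|g\|_{L^\infty}f^*(t)$ for all $t>0$, and substituting this into the Lorentz quasi-norm yields the conclusion with $C=1$.

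I expect the only genuinely delicate point to be the uniform treatment of the whole exponent range $0 < r,r_1,r_2 \le \infty$, including the quasi-norm regime $r<1$ and the endpoint $r=\infty$: one must avoid any hidden use of the triangle inequality, and the decisive observation is that the generalized H\"older inequality above is a purely measure-theoretic statement requiring no norm structure, which is exactly what lets the argument proceed without case distinctions. Establishing the rearrangement submultiplicativity carefully (the $\epsilon$-limit and the monotonicity conventions for $f^*$) is routine but forms the conceptual core on which everything else rests.
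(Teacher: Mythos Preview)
The paper does not supply a proof of this lemma; it is stated as a standard fact about Lorentz spaces, with Grafakos \cite{MR2445437} and Lemari\'e-Rieusset \cite{MR1938147} cited for background. Your argument is the classical one (essentially O'Neil's inequality): the rearrangement submultiplicativity $(fg)^*(t)\le f^*(t/2)\,g^*(t/2)$, followed by H\"older on $\bigl((0,\infty),\tfrac{dt}{t}\bigr)$ and the monotone embedding $L^{q,\tilde r}\hookrightarrow L^{q,r}$ for $\tilde r\le r$. All steps are correct, including your handling of the full range $0<r,r_1,r_2\le\infty$ by reducing the half-line H\"older step to the exponents $r_i/\tilde r\ge 1$; part (ii) is indeed immediate from pointwise domination. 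There is nothing to compare against in the paper itself, but your proof matches what one finds in the standard references the authors invoke.
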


We will use the following Sobolev critical embedding (see Appendix in  Alvino, Trombetti and Lions \cite{MR979040}).
\begin{lemma} \label{critical-embedding-lorenz}  Let $ d \geq 3$.  Then, for any $ f \in \dot{H}^1(\mathbb{R}^d)$, there exists  $C > 0$ such that
\begin{equation*} 
\| f \|_{L^{q_c,2}}\leq C \| f \|_{\dot{H}^1},
\end{equation*}
where $ q_c = \frac{2d}{d-2} $ is the critical Sobolev exponent.
\end{lemma}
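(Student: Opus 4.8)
The plan is to run Schonbek's Fourier splitting method on $v:=\Lambda u$, for which $\|v(t)\|_{L^2}=\|u(t)\|_{\dot{H}^1}$ and whose initial datum $v_0=\Lambda u_0$ carries exactly the decay character $q^\ast$ of the statement. Write $\theta:=2^*(\gamma)-2=\frac{2(2-\gamma)}{d-2}$ for the extra power in $N(u):=|x|^{-\gamma}|u|^{\theta}u$. The first task is to turn the critical norm into a Lyapunov functional carrying a spectral dissipation term. Testing \eqref{eq:HS} against $-\Delta u$ and integrating by parts gives
\[
\frac{d}{dt}\|u\|_{\dot{H}^1}^2 = -2\|\Delta u\|_{L^2}^2 + 2(\theta+1)\int_{\RR^d}\frac{|u|^{\theta}|\nabla u|^2}{|x|^{\gamma}}\,dx + \frac{2\gamma(d-\gamma-2)}{2^*(\gamma)}\int_{\RR^d}\frac{|u|^{2^*(\gamma)}}{|x|^{\gamma+2}}\,dx ,
\]
so that everything hinges on absorbing the two positive nonlinear terms into $\|\Delta u\|_{L^2}^2$ once $t$ is large.

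The cleanest illustration of the role of the Lorentz spaces is the bound $\int_{\RR^d}|x|^{-\gamma}|u|^{\theta}|\nabla u|^2\,dx\le C\|u\|_{\dot{H}^1}^{\theta}\|\Delta u\|_{L^2}^2$, which I would prove by the generalized Hölder inequality of Lemma \ref{lem:Holder} applied to $|x|^{-\gamma}\in L^{d/\gamma,\infty}$, to $|u|^{\theta}\in L^{q_c/\theta,2/\theta}$ (via Lemma \ref{critical-embedding-lorenz}), and to $|\nabla u|^2\in L^{q_c/2,1}$ (applying Lemma \ref{critical-embedding-lorenz} to $\nabla u$, so $\|\nabla u\|_{L^{q_c,2}}\le C\|\Delta u\|_{L^2}$); the exponents match on the nose because $\frac{\gamma}{d}+\frac{2^*(\gamma)}{q_c}=1$, the critical Hardy–Sobolev balance. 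For the more singular term I would use $\int|x|^{-\gamma-2}|u|^{2^*(\gamma)}\le\||x|^{-2}u\|_{L^2}\,\big\||x|^{-\gamma}|u|^{\theta+1}\big\|_{L^2}$, bound the first factor by $C\|\Delta u\|_{L^2}$ through the Rellich inequality (Lemma \ref{lem:Rellich-ineq}, whence $d\ge5$), and control the second by the scaling-critical weighted inequality $\big\||x|^{-\gamma}|u|^{\theta+1}\big\|_{L^2}\le C\|u\|_{\dot{H}^1}^{\theta}\|\Delta u\|_{L^2}$, which interpolates between the Sobolev embedding $\dot H^2\hookrightarrow L^{2d/(d-4)}$ (the case $\gamma=0$) and the Rellich inequality (the case $\gamma=2$); its Lorentz-refined proof is where the delicate estimates enter. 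Since $u$ is dissipative, $\|u(t)\|_{\dot{H}^1}\to0$, so for $t\ge t_0$ the prefactors $C\|u\|_{\dot{H}^1}^{\theta}$ are $\le\tfrac12$ and I obtain $\frac{d}{dt}\|u\|_{\dot{H}^1}^2+\|\Delta u\|_{L^2}^2\le0$, and in particular the dissipation budget $\int_{t_0}^\infty\|\Delta u\|_{L^2}^2\,ds\le\|u(t_0)\|_{\dot{H}^1}^2<\infty$.

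Setting $y(t):=\|v(t)\|_{L^2}^2=\|u(t)\|_{\dot{H}^1}^2$ and using $\|\Delta u\|_{L^2}^2=\int|\xi|^2|\hat{v}|^2\,d\xi\ge g(t)^2\big(y(t)-\int_{|\xi|\le g}|\hat{v}|^2\,d\xi\big)$ with $g(t)^2=\frac{m}{2(1+t)}$, the differential inequality becomes
\[
\frac{d}{dt}\big[(1+t)^m y(t)\big]\le m\,(1+t)^{m-1}\int_{|\xi|\le g(t)}|\hat{v}(\xi,t)|^2\,d\xi .
\]
I then feed in the Duhamel formula $v=e^{t\Delta}v_0+\int_0^t e^{(t-s)\Delta}\Lambda N(u(s))\,ds$ and split the low-frequency integral. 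The linear part is governed by the decay character: since $|\widehat{v_0}(\xi)|\approx|\xi|^{q^\ast}$ near $\xi=0$ and $q^\ast>-\tfrac d2$, it is $\lesssim g^{\,d+2q^\ast}$, contributing the rate $(1+t)^{-(d/2+q^\ast)}$. For the nonlinear part I would use $\big(\int_{|\xi|\le g}|\xi|^2|\hat N|^2\big)^{1/2}\le C\,g\,\|N(u(s))\|_{L^2}$ together with the weighted bound $\|N(u(s))\|_{L^2}\le C\|u(s)\|_{\dot{H}^1}^{\theta}\|\Delta u(s)\|_{L^2}$ and the dissipation budget; this injects a competing low-frequency contribution whose decay saturates at $(1+t)^{-1}$, which is exactly the origin of the cap $\min\{\tfrac d2+q^\ast,1\}$.

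The main obstacle is this nonlinear low-frequency term. Because of the weight $|x|^{-\gamma}$ one is forced to measure $N(u)$ through the Rellich-type bound above, and the time integral controlling the resulting contribution is exactly borderline: a careful balancing of the Lorentz norms against the dissipation budget $\int\|\Delta u\|_{L^2}^2\,ds<\infty$ shows that it converges precisely when $2\theta>1$, i.e. $d<10-4\gamma$, in which regime the Grönwall argument closes at the rate $\min\{\tfrac d2+q^\ast,1\}$; at and beyond the threshold $d=10-4\gamma$ (equivalently $\theta=2^*(\gamma)-2=\tfrac12$) the integral diverges logarithmically and only the rate $[\ln(e+t)]^{-2}$ survives. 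Making this borderline analysis rigorous — in particular treating the near-diagonal part $s\approx t$ of the Duhamel integral, where only $L^2$-in-time control of $\|\Delta u(s)\|_{L^2}$ is available rather than a pointwise decay rate — is the crux of the proof, and is precisely where the modifications relative to the case $\gamma=0$ are concentrated.
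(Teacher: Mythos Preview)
Your proposal does not address the stated lemma at all. Lemma~\ref{critical-embedding-lorenz} is the Lorentz-space refinement of the critical Sobolev embedding, namely $\|f\|_{L^{q_c,2}}\le C\|f\|_{\dot H^1}$ for $q_c=\frac{2d}{d-2}$. In the paper this is not proved but simply quoted as a classical inequality, with a reference to the appendix of Alvino, Trombetti and Lions~\cite{MR979040}. A self-contained proof would go, for instance, via the Hardy--Littlewood--Sobolev inequality in Lorentz spaces or via the O'Neil convolution inequality applied to $f=c_d\,|x|^{2-d}\ast(-\Delta f)$, using $|x|^{2-d}\in L^{d/(d-2),\infty}$.

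What you have written is instead a sketch of the proof of Theorem~\ref{thm:decay}, the main decay result. Indeed, your argument \emph{invokes} Lemma~\ref{critical-embedding-lorenz} several times (``via Lemma~\ref{critical-embedding-lorenz}'', ``applying Lemma~\ref{critical-embedding-lorenz} to $\nabla u$''), so it is circular as a proof of that lemma. Even as a sketch of Theorem~\ref{thm:decay} it is close in spirit to the paper's approach (Lyapunov property via Rellich plus Lorentz--H\"older, then Fourier splitting and a bootstrap governed by whether $2\theta=\frac{4(2-\gamma)}{d-2}$ exceeds $1$), though your treatment of the borderline case $d=10-4\gamma$ is vaguer than the paper's two-step Gronwall bootstrap. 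But none of this is relevant to the statement at hand: you have proved the wrong theorem.
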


We state some  estimates for the heat semigroup.

\begin{proposition}[Prop 3.1, Chikami, Ikeda, Taniguchi and              Tayachi \cite{MR4803464}]	\label{prop:linear-main}
Let $d \in \mathbb N$, $1\le q_1\le \infty$, $1<q_2\le \infty$, $0< r_1,r_2 \le \infty$ and $s_1,s_2\in \mathbb R$.
Then there exists a constant $C>0$ such that
\begin{equation*} 
\||x|^{s_2}e^{t\Delta}f\|_{L^{q_2,r_2}}\le C t ^{-\frac{d}{2} (\frac{1}{q_1} - \frac{1}{q_2}) - \frac{s_1 - s_2}{2}}\||x|^{s_1}f\|_{L^{q_1,r_1}}
\end{equation*}
for any $t>0$ if $q_1,q_2,r_1,r_2,s_1,s_2$ satisfy
\begin{align*}
  & 0 \le \frac{s_2}{d} + \frac{1}{q_2} \le \frac{s_1}{d} + \frac{1}{q_1} \le 1, \label{linear-condi1}\\
  & s_2 \le s_1,
\end{align*}
and
\begin{align*}
  & r_1 \le 1\quad \text{if }\frac{s_1}{d} + \frac{1}{q_1}=1 \text{ or }q_1=1,
  \\
  & r_2 = \infty\quad \text{if } \frac{s_2}{d} + \frac{1}{q_2}=0,
  \\
  & r_1 \le r_2 \quad \text{if }\frac{s_1}{d} + \frac{1}{q_1}=\frac{s_2}{d} + \frac{1}{q_2},
  \\
  & r_i=\infty \quad \text{if } q_i=\infty \quad (i=1,2).
\end{align*}
\end{proposition}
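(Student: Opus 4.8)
The plan is to first remove the $t$-dependence by parabolic scaling and then prove the resulting fixed-time bound by a size decomposition in the pair $(|x|,|y|)$, estimating each region by the generalized H\"older inequality in Lorentz spaces (Lemma \ref{lem:Holder}) together with a Young-type convolution inequality for the Gaussian. For the scaling step I would set $h(x)=f(\sqrt t\,x)$ and use the covariance $e^{\Delta}h(x)=(e^{t\Delta}f)(\sqrt t\,x)$ together with the dilation law $\|g(\lambda\,\cdot)\|_{L^{q,r}}=\lambda^{-d/q}\|g\|_{L^{q,r}}$; writing the $t=1$ inequality for $h$ and unwinding the two dilations produces exactly the factor $t^{-\frac d2(\frac1{q_1}-\frac1{q_2})-\frac{s_1-s_2}{2}}$, so the power of $t$ is forced by homogeneity and it suffices to prove
\[
\||x|^{s_2}e^{\Delta}f\|_{L^{q_2,r_2}}\le C\,\||x|^{s_1}f\|_{L^{q_1,r_1}}.
\]
Putting $g=|x|^{s_1}f$, this is the boundedness from $L^{q_1,r_1}$ to $L^{q_2,r_2}$ of the integral operator with kernel $K(x,y)=|x|^{s_2}|y|^{-s_1}G(x-y,1)$.

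The structural difficulty is that $s_2$ may be positive, so $|x|^{s_2}$ is a growing weight that cannot be absorbed globally. I would resolve this by splitting the $y$-integral into the three regions $|y|\le|x|/2$, $|x|\le|y|/2$, and $|x|/2<|y|<2|x|$. In the comparable region $|x|\sim|y|$ the two weights collapse, $|x|^{s_2}|y|^{-s_1}\sim|x|^{-(s_1-s_2)}$, which is a \emph{decaying} power since $s_1\ge s_2$; bounding the truncated integral by $e^{\Delta}|g|$ and using that $|x|^{-(s_1-s_2)}\in L^{d/(s_1-s_2),\infty}$, the generalized H\"older inequality peels off the weight and a Young / O'Neil convolution inequality (available because $G(\cdot,1)$ lies in every Lorentz space) handles $e^{\Delta}|g|$. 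In the two far regions $|x-y|\gtrsim\max(|x|,|y|)$, so the superpolynomial Gaussian decay dominates the growing weight $|x|^{s_2}$, and again H\"older in Lorentz absorbs the remaining power $|y|^{-s_1}$ against $g$. The chain $0\le\frac{s_2}{d}+\frac1{q_2}\le\frac{s_1}{d}+\frac1{q_1}\le1$, together with $s_2\le s_1$, is precisely what makes all these H\"older and Young exponents admissible.

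The hard part will be tracking the secondary Lorentz indices sharply enough to reproduce the three boundary hypotheses, since each is the signature of an endpoint of the H\"older/Young estimates. Writing $\frac1{p_i}=\frac{s_i}{d}+\frac1{q_i}$, the requirement $r_1\le1$ when $\frac{s_1}{d}+\frac1{q_1}=1$ or $q_1=1$ marks the $L^1$-type endpoint $p_1=1$ (or $q_1=1$), where the source must sit in the smallest Lorentz class; $r_2=\infty$ when $\frac{s_2}{d}+\frac1{q_2}=0$ marks the $L^\infty$-type endpoint $p_2=\infty$, where only the weak target $L^{q_2,\infty}$ survives; and $r_1\le r_2$ when $\frac{s_1}{d}+\frac1{q_1}=\frac{s_2}{d}+\frac1{q_2}$ is the monotonicity of the secondary index forced in the no-gain diagonal case, exactly as in the endpoint clauses of O'Neil's inequality. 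My plan is to establish the estimate on the open range of effective exponents directly from the region-by-region argument, prove the corresponding weak-type bounds at the two endpoints, and then recover the full strong bounds in the open range by real interpolation, which is natural because the Lorentz spaces form the real-interpolation scale between the endpoints.
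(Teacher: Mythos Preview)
The paper does not contain a proof of this proposition: it is quoted verbatim as ``Prop 3.1, Chikami, Ikeda, Taniguchi and Tayachi \cite{MR4803464}'' and used as a black box (only through its Corollary \ref{cor:linear-main}) in the nonlinear estimates of Section \ref{section-proof}. So there is no in-paper argument to compare your proposal against.

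That said, your outline is a standard and workable route to such weighted smoothing bounds. The scaling reduction to $t=1$ is correct and indeed forces the stated power of $t$. The three-region decomposition in $(|x|,|y|)$ is the natural device for handling the potentially growing weight $|x|^{s_2}$, and your identification of the endpoint clauses with the $L^1$-, $L^\infty$-, and diagonal cases of O'Neil's inequality is on target. The part that will require the most care in execution is the far region $|y|\le|x|/2$ when $s_2>0$: there the Gaussian must simultaneously absorb the growth $|x|^{s_2}$ and produce an $L^{q_2,r_2}$ function of $x$, so one typically writes $|x|^{s_2}G(x-y,1)\le C\,G(x-y,2)$ on that set and then applies the convolution estimate; make sure the Lorentz secondary indices survive this step. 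If you want to check your argument against a published one, the reference \cite{MR4803464} is where the authors locate the full proof.
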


From this proposition with $s_1=\gamma$, $s_2=0$ and $f=|x|^{-\gamma}g$, we have the following.
\begin{corollary}
	\label{cor:linear-main}
Let $d \in \mathbb N$, $1\le q_1\le \infty$, $1<q_2\le \infty$, $0< r_1,r_2 \le \infty$ and $\gamma\ge 0$.
Then there exists a constant $C>0$ such that
\begin{equation*}
\|e^{t\Delta}(|x|^{-\gamma}g)\|_{L^{q_2,r_2}}\le C t ^{-\frac{d}{2} (\frac{1}{q_1} - \frac{1}{q_2}) - \frac{\gamma}{2}}\|g\|_{L^{q_1,r_1}}
\end{equation*}
for any $t>0$ if $q_1,q_2,r_1,r_2$ satisfy
\begin{equation*}
  0 \le \frac{1}{q_2} \le \frac{\gamma}{d} + \frac{1}{q_1} \le 1, 
\end{equation*}
and
\begin{align*}
  & r_1 \le 1\quad \text{if }\frac{\gamma}{d} + \frac{1}{q_1}=1 \text{ or }q_1=1, 
  \\
  & r_2 = \infty\quad \text{if } q_2=\infty,
  \\
  & r_1 \le r_2 \quad \text{if }\frac{\gamma}{d} + \frac{1}{q_1}=\frac{1}{q_2}, 
  \\
  & r_i=\infty \quad \text{if } q_i=\infty \quad (i=1,2).
\end{align*}
\end{corollary}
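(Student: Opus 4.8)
The plan is to obtain the Corollary as a direct specialization of Proposition \ref{prop:linear-main}. Concretely, I would apply that proposition with the parameter choices $s_1 = \gamma$, $s_2 = 0$, and with $f$ replaced by $|x|^{-\gamma} g$. The first task is to verify that, under these choices, both sides of the inequality in Proposition \ref{prop:linear-main} collapse onto the corresponding sides of the Corollary. On the left, the weight $|x|^{s_2} = |x|^0 = 1$ disappears, leaving $\|e^{t\Delta}(|x|^{-\gamma} g)\|_{L^{q_2,r_2}}$. On the right, the weighted quantity becomes $\||x|^{s_1} f\|_{L^{q_1,r_1}} = \||x|^{\gamma} |x|^{-\gamma} g\|_{L^{q_1,r_1}} = \|g\|_{L^{q_1,r_1}}$, since $|x|^{\gamma} |x|^{-\gamma} = 1$ almost everywhere. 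Finally, the exponent of $t$ reduces from $-\frac{d}{2}\bigl(\frac{1}{q_1} - \frac{1}{q_2}\bigr) - \frac{s_1 - s_2}{2}$ to $-\frac{d}{2}\bigl(\frac{1}{q_1} - \frac{1}{q_2}\bigr) - \frac{\gamma}{2}$, exactly as asserted.

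The remaining step is to check that the structural hypotheses of Proposition \ref{prop:linear-main} translate precisely into those listed in the Corollary. Substituting $s_1 = \gamma$, $s_2 = 0$ into the chain $0 \le \frac{s_2}{d} + \frac{1}{q_2} \le \frac{s_1}{d} + \frac{1}{q_1} \le 1$ yields $0 \le \frac{1}{q_2} \le \frac{\gamma}{d} + \frac{1}{q_1} \le 1$, which is exactly the displayed condition of the Corollary. The monotonicity requirement $s_2 \le s_1$ reads $0 \le \gamma$ and is therefore automatic from the standing assumption $\gamma \ge 0$. Each of the four conditions on $r_1, r_2$ transforms in the same mechanical fashion: the clause ``$r_1 \le 1$ if $\frac{s_1}{d} + \frac{1}{q_1} = 1$ or $q_1 = 1$'' becomes ``$r_1 \le 1$ if $\frac{\gamma}{d} + \frac{1}{q_1} = 1$ or $q_1 = 1$''; the clause ``$r_2 = \infty$ if $\frac{s_2}{d} + \frac{1}{q_2} = 0$'' becomes ``$r_2 = \infty$ if $q_2 = \infty$'' (using $s_2 = 0$, so that the condition $\frac{1}{q_2}=0$ is equivalent to $q_2=\infty$); the endpoint-matching clause ``$r_1 \le r_2$ if $\frac{s_1}{d} + \frac{1}{q_1} = \frac{s_2}{d} + \frac{1}{q_2}$'' becomes ``$r_1 \le r_2$ if $\frac{\gamma}{d} + \frac{1}{q_1} = \frac{1}{q_2}$''; and the clause ``$r_i = \infty$ if $q_i = \infty$'' is left unchanged.

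Since all hypotheses of the Corollary match exactly those required by Proposition \ref{prop:linear-main} under the chosen substitution, and since both sides of the inequality simplify as above, the conclusion follows immediately. There is essentially no deep obstacle here; the only point demanding genuine care is bookkeeping, namely ensuring that the substitution $s_1 = \gamma$, $s_2 = 0$, $f = |x|^{-\gamma} g$ is applied consistently in every hypothesis and that no side condition is dropped or mistranslated. In particular, one should confirm that the degenerate case $q_2 = \infty$, where $\frac{1}{q_2} = 0$, is precisely the situation that forces $r_2 = \infty$, which reconciles the two formulations of that clause.
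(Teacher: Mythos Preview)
Your proposal is correct and follows exactly the approach the paper takes: the paper simply states that the corollary follows from Proposition~\ref{prop:linear-main} with $s_1=\gamma$, $s_2=0$, and $f=|x|^{-\gamma}g$, and you have carried out precisely that substitution, carefully verifying that every hypothesis translates as claimed.
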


\subsection{Gronwall inequalities}

We will need the following Gronwall-type inequalities.

\begin{proposition}[Theorem 1, page 356, Mitrinovi\'{c}, Pe\v{c}ari\'{c} and Fink \cite{MR1190927}] \label{gronwall-1} Let $x,k:J \to \RR$ continuous and $a,b: J \to \R$ Riemann integrable in $J = [\alpha, \beta]$. Suppose that $b, k \geq 0$ in $J$. Then, if
\begin{displaymath}
x(t) \leq a(t) + b(t) \int _{\alpha} ^t k(s) x(s) \, ds, \quad t \in J
\end{displaymath}
then
\begin{displaymath}
x(t) \leq a(t) + b(t) \int _{\alpha} ^t a(s) k(s) \exp \left( \int_s ^t b(r)k(r)  \, dr \right) \, ds, \quad t \in J.
\end{displaymath}
\end{proposition}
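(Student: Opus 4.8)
The plan is to reduce the integral inequality to a linear one in a single unknown and then resum a Neumann-type series. Set
\[
z(t) := \int_\alpha^t k(s)\, x(s)\, ds, \qquad A(t) := \int_\alpha^t k(s)\, a(s)\, ds ,
\]
both well defined since $k$ is continuous and $x$ (resp. $a$) is continuous (resp. Riemann integrable), with $z(\alpha)=A(\alpha)=0$. Plugging the hypothesis $x(s)\le a(s)+b(s)z(s)$ into the definition of $z$ and using $k\ge 0$ gives the self-improving inequality
\[
z(t) \le A(t) + \int_\alpha^t k(s)b(s)\, z(s)\, ds, \qquad t\in J .
\]
(Formally, differentiating would give $z' \le ka + kb\,z$, suggesting the integrating factor $\mu(t)=\exp(-\int_\alpha^t kb)$; but since $a,b$ are only Riemann integrable I would avoid differentiation and argue with the integral form directly.)

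Next I would iterate the displayed inequality. Substituting it into itself $n$ times and using $k,b\ge 0$ to preserve inequalities, one gets
\[
z(t) \le \sum_{j=0}^{n-1}\; \int_\alpha^t \!\! k(s_1)b(s_1)\!\!\int_\alpha^{s_1}\!\! k(s_2)b(s_2)\cdots\!\!\int_\alpha^{s_{j-1}}\!\! k(s_j)b(s_j)\,A(s_j)\,ds_j\cdots ds_1 \;+\; R_n(t),
\]
where the $j=0$ term is $A(t)$ and the remainder $R_n$ is the $n$-fold nested integral of $kb$ against $z$; since $z$ is continuous on the compact $J$, $\lvert R_n(t)\rvert \le \bigl(\max_J |z|\bigr)\,\bigl(\int_\alpha^t kb\bigr)^{n}/n! \to 0$. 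For the main terms, write $A(s_j)=\int_\alpha^{s_j}k(\sigma)a(\sigma)\,d\sigma$, so each term is an integral over the simplex $\alpha\le\sigma\le s_j\le\cdots\le s_1\le t$; fixing $\sigma$ and using the symmetry of $\prod_i k(s_i)b(s_i)$ in $s_1,\dots,s_j$, the inner integral equals $\tfrac1{j!}\bigl(\int_\sigma^t k(r)b(r)\,dr\bigr)^j$. Summing over $j\ge 0$ yields
\[
z(t) \le \int_\alpha^t k(\sigma)\,a(\sigma)\,\exp\!\left(\int_\sigma^t k(r)b(r)\,dr\right) d\sigma .
\]
Finally, inserting this into $x(t)\le a(t)+b(t)z(t)$ gives exactly the asserted bound.

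There is no deep obstacle here; the only delicate points are bookkeeping the nested-integral (simplex) computation correctly and respecting the limited regularity of $a,b$. One could alternatively run the integrating-factor argument rigorously by noting that $\mu$ is Lipschitz on $J$ (as $kb$ is bounded) and $\mu z$ is absolutely continuous with $(\mu z)'=\mu(z'-kb\,z)\le \mu k a$ a.e., then integrating; but the iteration above is the more robust route under the stated hypotheses, and that is the argument I would write down.
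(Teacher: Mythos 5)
The paper does not prove this proposition at all: it is imported as a black box from Mitrinovi\'c, Pe\v{c}ari\'c and Fink, so there is no in-paper argument to compare against. Your proof is a correct, self-contained derivation. The reduction to the single unknown $z(t)=\int_\alpha^t kx$, giving $z\le A+\int_\alpha^t kb\,z$ (valid because multiplying the hypothesis by $k\ge 0$ and integrating preserves the inequality), the Picard iteration with the remainder bound $|R_n(t)|\le (\max_J|z|)\bigl(\int_\alpha^t kb\bigr)^n/n!$, and the simplex identity $\int_{\sigma\le s_j\le\cdots\le s_1\le t}\prod_i k(s_i)b(s_i)\,ds=\tfrac{1}{j!}\bigl(\int_\sigma^t kb\bigr)^j$ are all sound, and re-inserting the bound on $z$ into $x\le a+bz$ (using $b\ge 0$) gives exactly the stated conclusion. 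This is essentially the classical proof by Neumann series; the textbook's own argument is the integrating-factor computation you mention in your last paragraph, which is shorter but, as you note, needs the observation that $t\mapsto\exp(-\int_\alpha^t kb)$ is Lipschitz so that the product with the $C^1$ function $z$ is absolutely continuous. Two small points you should make explicit if you write this up: (i) since $a$ need not be nonnegative, the term-by-term summation $\sum_j\int_\alpha^t ka\,\tfrac{1}{j!}(\int_\sigma^t kb)^j\,d\sigma$ should be justified by the absolute bound $\int_\alpha^t k|a|\,\tfrac{1}{j!}(\int_\alpha^\beta kb)^j\,d\sigma$, which makes the convergence uniform; (ii) the Fubini/symmetrization step on the simplex should be backed by the boundedness and a.e.\ continuity of Riemann integrable functions. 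Neither is a gap, just bookkeeping.
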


\begin{proposition}[Corollary 1.2,  page 4,  Bainov and Simeonov \cite{MR1171448}] \label{gronwall-2} Let $a,b,\psi,:J \to \RR$ continuous in $J = [\alpha, \beta]$ and $b \geq 0$.  If $a(t)$ is nondecreasing then
\begin{displaymath}
\psi(t) \leq a(t) +  \int _{\alpha} ^t b(s) \psi(s) \, ds, \quad t \in J
\end{displaymath}
implies
\begin{displaymath}
\psi(t) \leq a(t)  \exp \left( \int_{\alpha} ^t b(s)  \, ds \right) \quad t \in J.
\end{displaymath}
\end{proposition}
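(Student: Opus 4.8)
The plan is to reduce the integral inequality to a linear first-order differential inequality for the accumulated remainder, integrate it with an integrating factor, and invoke the monotonicity of $a$ only at the very last step to collapse the general bound into the clean exponential form.

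First I would set $R(t) := \int_\alpha^t b(s)\psi(s)\,ds$, so that $R(\alpha) = 0$ and, since $b\psi$ is continuous, $R$ is differentiable with $R'(t) = b(t)\psi(t)$. The hypothesis reads $\psi(t) \leq a(t) + R(t)$, and because $b(t)\geq 0$, multiplying preserves the inequality, giving
\[
R'(t) = b(t)\psi(t) \leq a(t)b(t) + b(t)R(t).
\]
I would then rewrite this as $R'(t) - b(t)R(t) \leq a(t)b(t)$ and multiply by the positive integrating factor $\mu(t) := \exp\left(-\int_\alpha^t b(r)\,dr\right)$, obtaining $\frac{d}{dt}\left[\mu(t)R(t)\right] \leq a(t)b(t)\mu(t)$. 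Integrating from $\alpha$ to $t$ and using $R(\alpha)=0$ yields
\[
R(t) \leq \exp\left(\int_\alpha^t b(r)\,dr\right)\int_\alpha^t a(s)b(s)\exp\left(-\int_\alpha^s b(r)\,dr\right)\,ds.
\]

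The decisive step is where the assumption that $a$ is nondecreasing enters. Since the weight $b(s)\exp(-\int_\alpha^s b(r)\,dr)$ is nonnegative and $a(s)\leq a(t)$ for $s\leq t$, I can bound $a(s)$ by $a(t)$ and pull the latter outside the integral. The remaining factor $\int_\alpha^t b(s)\exp(-\int_\alpha^s b(r)\,dr)\,ds$ is an exact derivative in $s$ and evaluates to $1 - \exp(-\int_\alpha^t b(r)\,dr)$. Substituting back, the bound on $R(t)$ collapses to $a(t)\left[\exp(\int_\alpha^t b(r)\,dr) - 1\right]$, whence
\[
\psi(t) \leq a(t) + R(t) \leq a(t)\exp\left(\int_\alpha^t b(r)\,dr\right),
\]
which is the claimed estimate.

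I do not anticipate a genuine obstacle, as this is a classical comparison argument; the single point requiring care is that $a$ is not assumed to be positive, so the monotonicity step must be carried out as an inequality between integrals, exploiting the nonnegativity of the weight $b(s)\mu(s)$, rather than by dividing through by $a$.
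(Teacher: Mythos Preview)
Your argument is correct and is the standard integrating-factor proof of this Gronwall variant. Note, however, that the paper does not supply its own proof of this proposition: it is quoted verbatim as Corollary 1.2 from Bainov and Simeonov \cite{MR1171448} and used as a black box in Section \ref{section-proof}. So there is no ``paper's proof'' to compare against; your write-up simply fills in what the paper delegates to the reference, and does so cleanly, including the care you take with the sign of $a$ in the final monotonicity step.
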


 \subsection{Decay Character}  \label{DecayCharacter} The decay character,  introduced by Bjorland and M.E. Schonbek \cite{MR2493562} and studied further by Niche and M.E. Schonbek \cite{MR3355116},  and Brandolese \cite{MR3493117},  associates to $v_0 \in L^2(\RR^d)$ a number $-\frac{d}{2} < r^{\ast} = r^{\ast} (v_0) < \infty$ that characterizes upper and lower bounds of the $L^2$ norm of solutions to the heat equation with such initial data.  Roughly speaking,  $r^{\ast} = r$  if $|\widehat{v_0} (\xi)| \approx |\xi|^{r}$ at $\xi = 0$.   We now recall the definition and properties of the decay character.

\begin{definition} \label{decay-indicator}
Let  $v_0 \in L^2(\RR^d)$. For $r \in \left(- \frac{d}{2}, \infty \right)$, we define the {\em decay indicator}  $P_r (v_0)$ corresponding to $v_0$ as
\begin{displaymath}
P_r(v_0) = \lim _{\rho \to 0} \rho ^{-2r-d} \int _{B(\rho)} \bigl |\widehat{v_0} (\xi) \bigr|^2 \, d \xi,
\end{displaymath}
provided this limit exists and    $B(\rho)$ refers to  the ball at the origin with radius $\rho$.
\end{definition}

\begin{definition} \label{df-decay-character} The {\em decay character of $ v_0$}, denoted by $r^{\ast} = r^{\ast}( v_0)$ is the unique  $r \in \left( -\frac{d}{2}, \infty \right)$ such that $0 < P_r (v_0) < \infty$, provided that this number exists. We set $r^{\ast} = - \frac{d}{2}$, when $P_r (v_0)  = \infty$ for all $r \in \left( - \frac{d}{2}, \infty \right)$  or $r^{\ast} = \infty$, if $P_r (v_0)  = 0$ for all $r \in \left( -\frac{d}{2}, \infty \right)$.
\end{definition}

The decay character can be computed in some important cases,  such as $v_0 \in L^p (\mathbb R^d) \cap L^2 (\mathbb R^d)$,  where $1 \leq p < 2$,  see M.E. Schonbek \cite{MR837929} and Ferreira, Niche and Planas \cite{MR3565380}.

Next Theorem describes the decay of solutions to the heat equation in terms of the decay character.

\begin{theorem} [Theorem 5.8, Bjorland and M.E.Schonbek \cite{MR2493562}]
\label{characterization-decay-l2}
Let $v_0 \in L^2 (\RR^d)$ has decay character $r^{\ast} (v_0) = r^{\ast}$.  Let $v (t)$ be a solution to heat equation with initial datum $v_0$.  Then if $- \frac{d}{2 } < r^{\ast}< \infty$,  there exist constants $C_1, C_2> 0$ such that
\begin{displaymath}
C_1 (1 + t)^{- \left( \frac{d}{2} + r^{\ast} \right)} \leq \Vert v(t) \Vert _{L^2} ^2 \leq C_2 (1 + t)^{- \left( \frac{d}{2} + r^{\ast} \right)}.
\end{displaymath}
\end{theorem}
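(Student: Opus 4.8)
The plan is to prove the upper and lower $L^2$ decay bounds for solutions of the heat equation using the Fourier splitting method, which reduces everything to controlling the low-frequency part of $\widehat{v_0}$ through the decay character. Since $v(t) = e^{t\Delta} v_0$, we have the exact Fourier-side representation $\widehat{v}(\xi,t) = e^{-t|\xi|^2} \widehat{v_0}(\xi)$, so by Plancherel
\[
\Vert v(t) \Vert _{L^2}^2 = \int_{\RR^d} e^{-2t|\xi|^2} |\widehat{v_0}(\xi)|^2 \, d\xi.
\]
The whole analysis of decay then hinges on how $|\widehat{v_0}(\xi)|^2$ behaves near $\xi = 0$, which is exactly what the decay indicator $P_r(v_0)$ and the decay character $r^\ast$ encode.

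For the \emph{upper bound}, I would split the frequency domain into a shrinking ball $B(g(t))$ and its complement, with radius $g(t) \sim (1+t)^{-1/2}$. On the complement, the Gaussian factor $e^{-2t|\xi|^2}$ is bounded by $e^{-2t g(t)^2} = e^{-2}$ uniformly, and one exploits that $e^{-t\Delta}$ dissipates energy, so the high-frequency contribution decays faster than any polynomial rate we are after (in fact one differentiates $\Vert v(t)\Vert_{L^2}^2$ in time to get the Fourier splitting differential inequality $\frac{d}{dt}\Vert v\Vert_{L^2}^2 \le -2g(t)^2 \Vert v\Vert_{L^2}^2 + 2g(t)^2 \int_{B(g(t))} |\widehat{v}|^2 d\xi$). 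The ball integral is estimated using the definition of $r^\ast$: since $0 < P_{r^\ast}(v_0) < \infty$, for small $\rho$ one has $\int_{B(\rho)} |\widehat{v_0}|^2 d\xi \lesssim \rho^{2r^\ast + d}$, and then $\int_{B(g(t))} e^{-2t|\xi|^2}|\widehat{v_0}|^2 d\xi \lesssim g(t)^{2r^\ast + d} = (1+t)^{-(\frac{d}{2}+r^\ast)}$. Feeding this into the differential inequality and integrating (via a standard Gronwall-type argument, e.g. Proposition \ref{gronwall-2}) yields the claimed upper bound $\Vert v(t)\Vert_{L^2}^2 \le C_2 (1+t)^{-(\frac{d}{2}+r^\ast)}$.

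For the \emph{lower bound}, I would restrict the Plancherel integral to the ball $B(g(t))$ and use the lower control $0 < P_{r^\ast}(v_0)$, which gives $\int_{B(\rho)} |\widehat{v_0}|^2 d\xi \gtrsim \rho^{2r^\ast+d}$ for small $\rho$. Keeping only frequencies $|\xi| \le g(t)$ where $e^{-2t|\xi|^2} \ge e^{-2}$, one obtains
\[
\Vert v(t) \Vert_{L^2}^2 \ge \int_{B(g(t))} e^{-2t|\xi|^2} |\widehat{v_0}(\xi)|^2 \, d\xi \ge e^{-2} \int_{B(g(t))} |\widehat{v_0}(\xi)|^2 \, d\xi \gtrsim g(t)^{2r^\ast+d} = C_1 (1+t)^{-(\frac{d}{2}+r^\ast)},
\]
matching the upper rate.

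The main obstacle is \emph{subtly} handling the gap between the pointwise asymptotics encoded by $P_{r^\ast}$ (a limit as $\rho \to 0$) and the uniform two-sided ball estimates $\rho^{2r^\ast+d} \lesssim \int_{B(\rho)}|\widehat{v_0}|^2 \lesssim \rho^{2r^\ast+d}$ that the Fourier splitting argument requires for all small $\rho$; passing from the limit to genuine bounds requires the finiteness and positivity of the decay character together with a careful choice of threshold $\rho_0$ below which the asymptotics are valid, and then verifying that $g(t) \le \rho_0$ for $t$ large, which is why the conclusion holds only for large $t$ (and is then extended to all $t$ by adjusting constants on the compact time interval). One must also confirm that the upper-bound argument's high-frequency remainder is genuinely negligible relative to the target polynomial rate—this is automatic because $e^{-2tg(t)^2}$ decay dominates any fixed power, but it should be checked against the Gronwall integration to ensure the integrating factor does not reintroduce a slower term. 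I expect this interplay between the defining limit of $r^\ast$ and the required uniform-in-$\rho$ estimates to be the delicate point, with the rest being routine Fourier splitting bookkeeping.
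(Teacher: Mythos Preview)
The paper does not prove this theorem; it is quoted verbatim as Theorem~5.8 of Bjorland and M.E.~Schonbek \cite{MR2493562} and used as a black box in the proof of Theorem~\ref{thm:decay} (see \eqref{eq:decay-linear-part}). There is therefore no in-paper proof to compare against.

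That said, your sketch is the standard Fourier splitting argument and matches how the original reference proceeds. The upper bound via the differential inequality $\frac{d}{dt}\Vert v\Vert_{L^2}^2 = -2\int |\xi|^2 |\widehat v|^2\,d\xi \le -2g(t)^2\Vert v\Vert_{L^2}^2 + 2g(t)^2\int_{B(g(t))}|\widehat{v_0}|^2\,d\xi$ combined with $\int_{B(\rho)}|\widehat{v_0}|^2\,d\xi \lesssim \rho^{2r^\ast+d}$, and the lower bound via restriction to $B((1+t)^{-1/2})$ where the Gaussian multiplier is bounded below, are both correct. Your identification of the one genuine subtlety---converting the defining limit $P_{r^\ast}(v_0)\in(0,\infty)$ into two-sided bounds valid for all $\rho\le\rho_0$, then ensuring $g(t)\le\rho_0$ for large $t$ and adjusting constants on $[0,T_0]$---is exactly right.
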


A decay characterization analogous to that in Theorem \ref{characterization-decay-l2} can be obtained for more general diagonalizable linear operators $\Ll$,  i.e.  those such that its symbol matrix $\widehat{M}$  in $\widehat{\Ll u} = \widehat{M} \widehat{u}$ is such that $\widehat{M} = - O^t \cdot |\xi|^{2 \alpha} Id _{\RR^d} \cdot O$,  with $O \in O(d)$,  see Niche and M.E. Schonbek \cite{MR3355116}. The decay character can also provide estimates for linear systems that are not diagonalizable,  for example the damped wave equation,  see C\'ardenas and Niche \cite{CARDENAS2022125548},  or the  Navier-Stokes-Voigt equations,  see Niche \cite{MR3437593}.

In Definition \ref{decay-indicator} we assume the existence of a certain limit leading to a positive $P_r (u_0)$ and then to a decay character in Definition \ref{df-decay-character}.  That limit may not exist for some $v_0 \in L^2 (\RR ^d)$,  as  Brandolese \cite{MR3493117} showed  by constructing initial data with fast oscillations near the origin in frequency space for which Definition \ref{decay-indicator} does not hold.  To circumvent this problem,  he  introduced the idea of an upper and lower decay character that,  when equal,  recover the  one  in Definition \ref{df-decay-character}.  He also proved that the decay character exists if and only if $v_0$ belongs to an explicit subset $\dot{\mathcal{A}} ^{- \left( \frac{d}{2} + r^{\ast} \right)} _{2, \infty}$ in the critical homogeneous Besov space  $\dot{B} ^{- \left( \frac{d}{2} + r^{\ast} \right)} _{2, \infty}$,  and that this happens if and only if solutions to linear diagonalizable systems as above have sharp algebraic decay.

\section{\texorpdfstring{\ensuremath{\dot{H}^{1}}}{H1} norm is a Lyapunov function}

\label{section-lyapunov-function}

The following result is key in the proof of the decay rates in  Theorem \ref{thm:decay}.

\begin{proposition}
\label{lyapunov}
Let $  d \geq 5$. The critical norm $\Vert \cdot \Vert _{\dot{H} ^1}$ is a Lyapunov function for dissipative solutions to \eqref{eq:HS}, for large enough $t$.
\end{proposition}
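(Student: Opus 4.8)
The plan is to show that $t\mapsto\|u(t)\|_{\dot{H}^1}$ (equivalently $\|\nabla u(t)\|_{L^2}^2$) is non-increasing once $t$ is large, which is exactly the asserted Lyapunov property. A dissipative solution is smooth for $t>0$ by the parabolic smoothing effect, so in particular $u(t)\in\dot{H}^2(\mathbb{R}^d)$ for every $t>0$, and the following computation is legitimate: differentiating in time and integrating by parts using \eqref{eq:HS},
\[
\frac{d}{dt}\|\nabla u(t)\|_{L^2}^2
=-2\int_{\mathbb{R}^d}\Delta u\,\partial_t u\,dx
=-2\|\Delta u\|_{L^2}^2-2\int_{\mathbb{R}^d}\Delta u\,\frac{|u|^{2^*(\gamma)-2}u}{|x|^\gamma}\,dx .
\]
By the Cauchy--Schwarz inequality the last integral is at most $\|\Delta u\|_{L^2}\,\bigl\||x|^{-\gamma}|u|^{2^*(\gamma)-1}\bigr\|_{L^2}$ in absolute value, so the whole argument reduces to the nonlinear estimate
\[
\bigl\||x|^{-\gamma}|u|^{2^*(\gamma)-1}\bigr\|_{L^2}\le C\,\|u\|_{\dot{H}^1}^{\,2^*(\gamma)-2}\,\|\Delta u\|_{L^2}.
\]
Granting it, the two displays give $\frac{d}{dt}\|\nabla u(t)\|_{L^2}^2\le -2\bigl(1-C\|u(t)\|_{\dot{H}^1}^{2^*(\gamma)-2}\bigr)\|\Delta u(t)\|_{L^2}^2$. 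Since $2^*(\gamma)-2=\tfrac{2(2-\gamma)}{d-2}>0$ and $\|u(t)\|_{\dot{H}^1}\to 0$ as $t\to\infty$ by dissipativity, there is $t_0>0$ with $C\|u(t)\|_{\dot{H}^1}^{2^*(\gamma)-2}\le\tfrac12$ for all $t\ge t_0$, hence $\frac{d}{dt}\|\nabla u(t)\|_{L^2}^2\le -\|\Delta u(t)\|_{L^2}^2\le 0$ on $[t_0,\infty)$, as desired.

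It remains to prove the nonlinear estimate, and this is the step where the singular weight forces the use of Lorentz spaces. Put $b:=2\bigl(2^*(\gamma)-1\bigr)-\gamma=\tfrac{(2-\gamma)(d+2)}{d-2}$, so that pointwise $\bigl(|x|^{-\gamma}|u|^{2^*(\gamma)-1}\bigr)^2=\bigl(|u|\,|x|^{-2}\bigr)^{\gamma}\,|u|^{b}$. I would apply the generalized H\"older inequality in Lorentz spaces (Lemma \ref{lem:Holder}) to the factors $\bigl(|u|\,|x|^{-2}\bigr)^{\gamma}\in L^{2/\gamma,\,2/\gamma}$ and $|u|^{b}\in L^{2/(2-\gamma),\,2/b}$ --- the relevant identities being $\tfrac{\gamma}{2}+\tfrac{2-\gamma}{2}=1$ and $\tfrac{\gamma}{2}+\tfrac{b}{2}\ge 1$, the latter valid for all $0<\gamma<2$ --- to obtain
\[
\bigl\||x|^{-\gamma}|u|^{2^*(\gamma)-1}\bigr\|_{L^2}^{2}\le C\,\bigl\|\,|u|\,|x|^{-2}\,\bigr\|_{L^{2}}^{\gamma}\,\|u\|_{L^{p,2}}^{\,b},\qquad p:=\tfrac{2b}{2-\gamma}=\tfrac{2(d+2)}{d-2}.
\]
For the first factor, the Rellich inequality (Lemma \ref{lem:Rellich-ineq}, extended from $H^2$ to $\dot{H}^2(\mathbb{R}^d)$) gives $\bigl\|\,|u|\,|x|^{-2}\,\bigr\|_{L^2}\le C\|\Delta u\|_{L^2}$; this is the step that forces $d\ge5$. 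For the second factor, one checks $q_c=\tfrac{2d}{d-2}<p<\tfrac{2d}{d-4}$ (again requiring $d\ge5$), so interpolating the critical embedding $\|u\|_{L^{q_c,2}}\le C\|u\|_{\dot{H}^1}$ of Lemma \ref{critical-embedding-lorenz} with the second-order embedding $\dot{H}^2(\mathbb{R}^d)\hookrightarrow L^{2d/(d-4)}(\mathbb{R}^d)$ yields $\|u\|_{L^{p,2}}\le C\|u\|_{\dot{H}^1}^{1-\theta}\|\Delta u\|_{L^2}^{\theta}$ with $\theta=\tfrac{d-2}{d+2}$. Substituting, the exponents of $\|\Delta u\|_{L^2}$ and of $\|u\|_{\dot{H}^1}$ come out as $\gamma+b\theta=2$ and $b(1-\theta)=\tfrac{4(2-\gamma)}{d-2}=2\bigl(2^*(\gamma)-2\bigr)$ --- precisely the values admissible under the scaling \eqref{eq:scaling} --- and a square root finishes the estimate.

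The main obstacle is exactly this nonlinear bound: one must split the singular factor $|x|^{-\gamma}$ against the correct fractional power of $|u|$ so that the Rellich inequality absorbs the singularity while the leftover power of $|u|$ lands in an interpolation space between $\dot{H}^1$ and $\dot{H}^2$, and then check both that the Lorentz exponents close and that the resulting powers are those permitted by scaling; for $\gamma=0$ none of this machinery is needed, which is why the case with a genuine weight is substantially more delicate. A subsidiary point requiring care is the a priori regularity $u(t)\in\dot{H}^2(\mathbb{R}^d)$ for $t>0$, which underlies the differentiation of $\|\nabla u(t)\|_{L^2}^2$, the integration by parts, and the application of the Rellich inequality; this follows from the smoothing effect of the linear heat flow together with the mild formulation \eqref{integral-equation}.
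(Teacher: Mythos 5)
Your proof is correct and arrives at the same differential inequality as the paper, namely $\tfrac{d}{dt}\Vert \nabla u(t)\Vert_{L^2}^2 \le -2\bigl(1-C\Vert u(t)\Vert_{\dot H^1}^{2^*(\gamma)-2}\bigr)\Vert \Delta u(t)\Vert_{L^2}^2$, with dissipativity supplying the smallness that makes the right-hand side nonpositive for large $t$; but your treatment of the nonlinear pairing is genuinely different. The paper writes it as $\sum_j\langle \partial_{x_j}u,\partial_{x_j}(|x|^{-\gamma}|u|^{2^*(\gamma)-2}u)\rangle$, differentiates the singular nonlinearity (which produces the worse weight $|x|^{-\gamma-1}$), and controls each resulting piece by Lorentz--H\"older with $|x|^{-\gamma}\in L^{d/\gamma,\infty}$ together with the Rellich and Hardy--Sobolev inequalities, arriving at \eqref{eq:estimate-nonlinear}. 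You instead keep both derivatives on $u$, apply Cauchy--Schwarz against $\Delta u$, and prove the direct weighted bound $\Vert |x|^{-\gamma}|u|^{2^*(\gamma)-1}\Vert_{L^2}\le C\Vert u\Vert_{\dot H^1}^{2^*(\gamma)-2}\Vert \Delta u\Vert_{L^2}$ via the pointwise splitting $(|x|^{-\gamma}|u|^{2^*(\gamma)-1})^2=(|u|\,|x|^{-2})^{\gamma}|u|^{b}$ with $b=2(2^*(\gamma)-1)-\gamma$, Lemma \ref{lem:Holder}, Lemma \ref{lem:Rellich-ineq} for the first factor, and interpolation of $L^{p,2}$, $p=\tfrac{2(d+2)}{d-2}$, between Lemma \ref{critical-embedding-lorenz} and $\dot H^2\hookrightarrow L^{2d/(d-4),2}$ for the second. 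I checked the Lorentz indices and the exponent bookkeeping ($\gamma+b\theta=2$ and $b(1-\theta)=2(2^*(\gamma)-2)$ with $\theta=\tfrac{d-2}{d+2}$), and everything closes. Your route has the advantage of never differentiating the singular term, at the cost of needing the second-order Sobolev embedding; both arguments hinge on Rellich and therefore require $d\ge 5$ for the same reason.

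One point needs repair: dissipative solutions of \eqref{eq:HS} are \emph{not} smooth for $t>0$ --- the weight $|x|^{-\gamma}$ obstructs regularity at the origin --- so ``parabolic smoothing'' cannot be invoked as a blanket justification for the time differentiation, the integration by parts, and the membership $u(t)\in \dot H^2$. What your computation actually requires is $\partial_t u,\ \Delta u,\ |x|^{-\gamma}|u|^{2^*(\gamma)-2}u\in L^2_{\mathrm{loc}}([t_0,\infty);L^2(\RR^d))$, which is exactly what the paper imports from Proposition 2.9 of Chikami, Ikeda and Taniguchi \cite{MR4331259}, combined with Lemma 5.10 of \cite{MR2768550} to legitimize the energy identity; citing that result closes the gap. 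The extension of Lemma \ref{lem:Rellich-ineq} from $H^2$ to functions in $\dot H^1\cap\dot H^2$ is a routine density and scaling matter, and the paper glosses over the same point.
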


\begin{proof} We begin by noting that from the proof of Proposition 2.9 in Chikami, Ikeda and Taniguchi  \cite{MR4331259}, we obtain that for $d\geq 4$ or $ d=3 $ and $ 0 \leq \gamma < 3/2$, solutions to \eqref{eq:HS} are such that for any $t_0 \in (0,\infty)$
\begin{equation*} 
\partial_t u,\  \Delta u , \ |x|^{-\gamma}|u|^{2^*(\gamma) - 2} u \in L^2 _{loc} ( [t_0, \infty),  L^2 (\RR^d) ).
\end{equation*} This together with Lemma 5.10 in Bahouri,  Chemin and Danchin \cite{MR2768550} allow us to  deduce the energy equality
\begin{displaymath}
\Vert u(t_2) \Vert ^2 _{\dot{H} ^1}  + 2 \int _{t_1} ^{t_2} \Vert \nabla u(\tau) \Vert ^2  _{\dot{H}^1} \, d\tau = \Vert u (t_1) \Vert ^2 _{\dot{H} ^1} + 2  \int _{t_1} ^{t_2} \langle  u(\tau) , |x|^{-\gamma}|u(\tau)|^{2^*(\gamma)-2}u(\tau) \rangle _{\dot{H}^1} \, d \tau,
\end{displaymath}
 for any $t_1 < t_2$ in a compact interval.
 
 By the Parseval identity,   Lemmas \ref{lem:Holder}  and  \ref{critical-embedding-lorenz}  we have
\begin{align*}
\langle \Lambda u,\Lambda(|x|^{-\gamma}|u|^{2^*(\gamma)-2}u)\rangle_{L^2}
& =\sum_{j=1}^d\langle \partial_{x_j}u,\partial_{x_j}(|x|^{-\gamma}|u|^{2^*(\gamma)-2}u)\rangle_{L^2}\\
&\le C\|\Lambda u\|_{L^{\frac{2d}{d-2},2}}\sum_{j=1}^d\|\partial_{x_j}(|x|^{-\gamma}|u|^{2^*(\gamma)-2}u)\|_{L^{\frac{2d}{d+2},2}}\\
&\le C\|\Lambda u\|_{\dot{H}^1}\sum_{j=1}^d\|\partial_{x_j}(|x|^{-\gamma}|u|^{2^*(\gamma)-2}u)\|_{L^{\frac{2d}{d+2},2}}.
\end{align*}
Since a direct computation gives
\begin{align*}
\partial_{x_j}(|x|^{-\gamma}|u|^{2^*(\gamma)-2}u)
=-\gamma|x|^{-\gamma-2}x_j|u|^{2^*(\gamma)-2}u
+(2^*(\gamma)-2)|x|^{-\gamma}|u|^{2^*(\gamma)-3}u\partial_{x_j}u+|x|^{-\gamma}|u|^{2^*(\gamma)-2}\partial_{x_j}u,
\end{align*}
the following estimate holds
\begin{align*}
|\partial_{x_j}(|x|^{-\gamma}|u|^{2^*(\gamma)-2}u)|
\le C|x|^{-\gamma-1}|u|^{2^*(\gamma)-1}+C|x|^{-\gamma}|u|^{2^*(\gamma)-2}|\partial_{x_j}u|.
\end{align*}
For the first term, by using again  Lemmas \ref{lem:Holder} and \ref{critical-embedding-lorenz},
\begin{align*}
    \||x|^{-\gamma-1}|u|^{2^{*}(\gamma)-2}u\|_{L^{\frac{2d}{d+2},2}}
    &\le \||x|^{-\gamma}\|_{L^{\frac{d}{\gamma},\infty}}\||u|^{2^*(\gamma)-2}\|_{L^{\frac{2d}{4-2\gamma},\infty}}\||x|^{-1}u\|_{L^{\frac{2d}{d-2},2}}\\
    &\le C\|u\|_{L^{\frac{2d}{d-2},\infty}}^{2^{*}(\gamma)-2}\|\nabla(|x|^{-1}u)\|_{L^2}\\
    &\le C\|u\|_{\dot{H}^1}^{2^{*}(\gamma)-2}(\||x|^{-2}u\|_{L^2}+\||x|^{-1}\nabla u\|_{L^2})\\
    & \le C\|u\|_{\dot{H}^1}^{2^*(\gamma)-2}\|\Lambda u\|_{\dot{H}^1}
\end{align*}
where we have used Lemmas \ref{lem:Rellich-ineq} and  \ref{lem:HS-ineq}.  For the second term,  we apply again  Lemmas \ref{lem:Holder}   and   \ref{critical-embedding-lorenz}, to arrive at

\begin{align*}
\||x|^{-\gamma}|u|^{2^*(\gamma)-2}\partial_{x_j}u\|_{L^{\frac{2d}{d+2},2}}
&\le \||x|^{-\gamma}\|_{L^{\frac{d}{\gamma},\infty}}\||u|^{2^*(\gamma)-2}\|_{L^{\frac{2d}{4-2\gamma},\infty}}\|\partial_{x_j}u\|_{L^{\frac{2d}{d-2},2}}\\
&\le C\|u\|_{L^{\frac{2d}{d-2},\infty}}^{2^{*}(\gamma)-2}\|\Lambda u\|_{\dot{H}^1}\\
& \le  C\|u\|_{\dot{H}^1}^{2^*(\gamma)-2}\|\Lambda u\|_{\dot{H}^1}.
\end{align*}
Consequently,
\begin{equation}\label{eq:estimate-nonlinear}
  \langle \Lambda u,\Lambda(|x|^{-\gamma}|u|^{2^*(\gamma)-2}u)\rangle_{L^2}\le C\|u\|_{\dot{H}^1}^{2^*(\gamma)-2}\|\nabla u\|_{\dot{H}^1}^2.
\end{equation}
Plugging this estimate in the energy equality there follows
\begin{displaymath}
\Vert u(t_2) \Vert ^2 _{\dot{H} ^1}  + 2 \int _{t_1} ^{t_2} \Vert \nabla u(\tau) \Vert ^2  _{\dot{H}^1} \, d\tau \leq \Vert u (t_1) \Vert ^2 _{\dot{H} ^1} + 2  \int _{t_1} ^{t_2} C\|u(\tau)\|_{\dot{H}^1}^{2^*(\gamma)-2}\|\nabla u(\tau)\|_{\dot{H}^1}^2 \, d \tau.
\end{displaymath}
As the solution is dissipative,  we can take $ T >0$ large enough such that $\| u(t) \|_{\dot{H}^1}$ is small for all $t \geq T$.  Hence, we infer that, for any $ T < t_1 < t_2$,
\[
\Vert u(t_2) \Vert ^2 _{\dot{H} ^1} \leq \Vert u(t_1) \Vert ^2 _{\dot{H} ^1} \,,
\]
concluding that the $\dot{H}^1$-norm is a Lyapunov function for large values of $t$.
\end{proof}

 \section{Proof of Theorem \ref{thm:decay}}

\label{section-proof}
 
We split the proof into several steps.

\subsection{A differential inequality}
We know, by Proposition \ref{lyapunov}, that   the $\dot{H} ^1$ norm is a nonincreasing function, hence has a derivative a.e. Then
\begin{align*}
\notag \frac{d}{dt} \Vert u(t) \Vert^2 _{\dot{H} ^1} & = 2 \langle \Lambda  u(t), \partial_t \Lambda u(t) \rangle  = 2 \left \langle \Lambda  u (t) ,\Lambda  \left( \Delta u (t)  +  |x|^{-\gamma}|u(t)|^{2^*(\gamma)-2}u(t) \right) \right  \rangle \notag \\ & \leq - 2   \left(1 -  C \Vert  u(t) \Vert^{2^*(\gamma)-2} _{\dot{H} ^1} \right) \Vert \nabla u(t) \Vert^2 _{\dot{H} ^1}  \leq - \widetilde{C} \Vert \nabla u(t) \Vert^2 _{\dot{H} ^1},
\end{align*}
where we have used \eqref{eq:estimate-nonlinear} and that the solution is dissipative to have a small enough $\| u (t)\|_{\dot{H}^1}$ for sufficiently large $t$.

Having this inequality, we now apply the Fourier Splitting Method, introduced by M.E. Schonbek, to analyze the energy decay in solutions to parabolic conservation laws \cite{MR571048} and the Navier-Stokes equations \cite{MR775190}, \cite{MR837929}. This method relies on the observation that, for many dissipative equations, the remaining energy at sufficiently large times is primarily concentrated in the low-frequency region. 
Let
$ B(t) $ be a ball centered at the origin in frequency space, with a continuously varying, time-dependent radius 
$r(t)$, such that
\begin{displaymath}
B(t) = \left\{\xi \in \mathbb{R}^d: |\xi| \leq r(t) = \left( \frac{g'(t)}{\widetilde{C} g(t)} \right) ^{\frac{1}{2}}  \right\},
\end{displaymath}
where $g$  is a continuous, increasing function satisfying 
 $g(0) = 1$.   By Plancherel's Theorem and using that \begin{displaymath}
- \widetilde{C} \Vert \nabla u(t) \Vert^2 _{\dot{H} ^1} \leq  - \frac{g'(t)}{g(t)} \int _{B(t) ^c} ||\xi| \widehat{u} (\xi,t) |^2 \, d \xi
\end{displaymath}
we deduce 
\begin{align*}
 \frac{d}{dt} \Vert u(t) \Vert^2 _{\dot{H} ^1} & \leq - \widetilde{C} \Vert \nabla u(t) \Vert^2 _{\dot{H} ^1} =  - \widetilde{C} \int _{\mathbb{R} ^d }|\xi|^2 \,  ||\xi| \widehat{u} (\xi,t)|^2 \, d \xi  \\  & \leq     \frac{g'(t)}{g(t)}  \int _{B(t)}  ||\xi| \widehat{u} (\xi,t)|^2 \, d \xi -  \frac{g'(t)}{g(t)}  \int _{\mathbb{R} ^d}  ||\xi|  \widehat{u} (\xi,t)|^2 \, d \xi.
\end{align*}
Next, we multiply both sides by 
$g(t)$
and rearrange to derive 
\begin{equation}
\label{eqn:key-inequality}
\frac{d}{dt}   \left( g(t)  \Vert u (t) \Vert _{\dot{H} ^1} ^2 \right) \leq g'(t)  \int _{B(t)} ||\xi|  \widehat{u} (\xi, t)| ^2 \, d \xi.
\end{equation}
To bound the right-hand side, we notice that
\begin{align}
\int _{B(t)} ||\xi|  \widehat{u} (\xi, t)| ^2 \, d \xi & \leq C \int _{B(t)} \left| e^{- t |\xi| ^2} |\xi| \widehat{u_0} (\xi) \right|^2 \, d \xi \notag \\ & + C \int _{B(t)} \left( \int _0 ^t  e^{- (t-s) |\xi| ^2} |\xi| \mathcal{F} \left[|x|^{-\gamma}|u|^{2^*(\gamma)-2}u \right] (\xi, s) \, ds \right) ^2 \, d \xi. \notag
\end{align}
The first term corresponds to the linear part, namely the heat equation, and can therefore be estimated using Theorem \ref{characterization-decay-l2} as
\begin{equation}
\label{eq:decay-linear-part}
\int _{B(t)} \left| e^{- t |\xi| ^2} |\xi| \widehat{u_0} (\xi) \right|^2 \, d \xi \leq C \Vert e^{t \Delta} \Lambda u_0 \Vert^2 _{L^2} \leq C (1 + t) ^{- \left( \frac{d}{2} + q^{\ast} \right)},
\end{equation}
where $q^{\ast} = r^{\ast} \left( \Lambda u_0 \right)$.

We next  deal with the nonlinear term.  By Fubini's Theorem
\begin{align*}
& \int_{B(t)}\left(\int_0^te^{-(t-s)|\xi|^2}|\xi|\mathcal{F}[|x|^{-\gamma}|u|^{2^*(\gamma)-2}u]ds\right)^2d\xi \\
&\le r(t)^2\int_{B(t)}\left(\int_{0}^t|\mathcal{F}[e^{(t-s)\Delta}(|x|^{-\gamma}|u|^{2^*(\gamma)-2}u)]|ds\right)^2d\xi\\
&=r(t)^2\int_{0}^t\int_{0}^t\int_{B(t)}|\mathcal{F}[e^{(t-s)\Delta}(|x|^{-\gamma}|u|^{2^*(\gamma)-2}u)]||\mathcal{F}[e^{(t-s')\Delta}(|x|^{-\gamma}|u|^{2^*(\gamma)-2}u)]| \, d\xi \, ds \, ds'\\
&\le r(t)^2\int_{0}^t\int_{0}^t\left(\int_{B(t)}|\mathcal{F}[e^{(t-s)\Delta}(|x|^{-\gamma}|u|^{2^*(\gamma)-2}u)]|^2d\xi\right)^{1/2}  \times \\
& \ \ \ \ \ \ \ \left(\int_{B(t)}|\mathcal{F}[e^{(t-s')\Delta}(|x|^{-\gamma}|u|^{2^*(\gamma)-2}u)]|^2d\xi\right)^{1/2} \, ds \, ds'.
\end{align*}
To estimate the term
\begin{align*}
    \int_{B(t)}|\mathcal{F}[e^{(t-s)\Delta}|x|^{-\gamma}|u(s)|^{2^*(\gamma)-2}u(s)]|^2d\xi
\end{align*}
we will use Lemma \ref{lem:Holder}. Let $\alpha\ge d/2$ and $\alpha'$ such that $1/\alpha+1/\alpha'=1$ and then pick $r\in (\alpha,\infty]$ and $r'$ satisfying $1/r+1/r' = 1$. So, we have
\begin{align*}
    \int_{B(t)}  |\mathcal{F}[e^{(t-s)\Delta}(|x|^{-\gamma}|u|^{2^*(\gamma)-2}u)]|^2d\xi
     & \le \|1\|_{L^{\alpha,r}(B(t))}\||\mathcal{F}[e^{(t-s)\Delta}(|x|^{-\gamma}|u|^{2^*(\gamma)-2})]|^2\|_{L^{\alpha',r'}(B(t))}\\
    &\le C r(t)^{d/\alpha}\|\mathcal{F}[e^{(t-s)\Delta}(|x|^{-\gamma}|u|^{2^*(\gamma)-2}u)]\|_{L^{2\alpha',2r'}}^2 
    \\ & \le C
    r(t)^{d/\alpha}\|e^{(t-s)\Delta}(|x|^{-\gamma}|u|^{2^*(\gamma)-2}u)]\|_{L^{\frac{2\alpha}{\alpha+1},2r'}}^2.
\end{align*}
Now, we employ the estimate of the heat semigroup in Corollary \ref{cor:linear-main}.
Let $\beta:=\frac{2d}{d+2-2\gamma}$ and $\mu\ge \frac{2(d-2)}{d+2-2\gamma}$. Since $\alpha\ge d/2$
\begin{align*}
\|e^{(t-s)\Delta}(|x|^{-\gamma}|u|^{2^*(\gamma)-2}u)\|_{L^{\frac{2\alpha}{\alpha+1},2r'}}
&\le C (t-s)^{-\frac{d}{2}\left(\frac{1}{\beta}-\frac{\alpha+1}{2\alpha}\right)-\frac{\gamma}{2}}\|u^{2^*(\gamma)-2}u\|_{L^{\beta,\mu}}
\\ & \le C (t-s)^{-\frac{1}{2}\left(1-\frac{d}{2\alpha}\right)} \|u\|_{L^{\frac{2d}{d-2},2}}^{2^*(\gamma)-1}
 \\ & \le C (t-s)^{-\frac{1}{2}\left(1-\frac{d}{2\alpha}\right)} C\|u(s)\|_{\dot{H}^1}^{2^*(\gamma)-1} ,
 \end{align*}
where we have used  Lemma \ref{critical-embedding-lorenz}. Then,  we obtain the  estimate for the nonlinear term
\begin{align*}
\int_{B(t)}\left(\int_0^te^{-(t-s)|\xi|^2}|\xi|\mathcal{F}[|x|^{-\gamma}|u|^{2^*(\gamma)-2}u]ds\right)^2d\xi 
& \le r(t)^{2+\frac{d}{\alpha}}\left(\int_{0}^t(t-s)^{-\frac{1}{2}\left(1-\frac{d}{2\alpha}\right)}\|u(s)\|_{\dot{H}^1}^{2^*(\gamma)-1}ds\right)^2\\
&\le r(t)^{2+\frac{d}{\alpha}}t^{\frac{d}{2\alpha}}\int_0^t\|u(s)\|_{\dot{H}^1}^{2(2^*(\gamma)-1)}ds.
\end{align*}

Hence, we arrive at the differential inequality
\begin{align}
\label{eqn:main-estimate}
\frac{d}{dt}   \left( g(t)  \Vert u (t) \Vert _{\dot{H} ^1} ^2 \right)   \leq g'(t) (1 + t) ^{- \left( \frac{d}{2} + q^{\ast} \right)}   + g'(t) r(t)^{2+\frac{d}{\alpha}}t^{\frac{d}{2\alpha}}\int_0^t\|u(s)\|_{\dot{H}^1}^{2(2^*(\gamma)-1)}ds.
\end{align}

We observe that the parameter $\alpha$  will be   chosen  later in order to optimize the decay.

\subsection{Preliminary decay}
Let
\begin{displaymath}
g(t) = [\ln (e+t)]^k,
\end{displaymath}
for some large $k$ to be determined later. Then
\begin{displaymath}
r(t) = C \left(\frac{1}{ (e+t) \ln (e+t)}  \right) ^{\frac{1}{2}}.
\end{displaymath}
As $\Vert u(t) \Vert _{\dot{H} ^1} \leq C$ for all $ t >0$, using this $r(t)$  in \eqref{eqn:main-estimate} we obtain
\begin{align}
\frac{d}{dt}   \left( [\ln (e+t)]^k  \Vert u (t) \Vert _{\dot{H} ^1} ^2 \right)  & \leq C \frac{[\ln (e+t)]^{k-1}}{e+t}(1 + t) ^{- \left( \frac{d}{2} + q^{\ast} \right)}   \notag \\ & + C \frac{[\ln (e+t)]^{k-1}}{e+t} \, \left(\frac{1}{(e+t) \ln (e+t)}  \right) ^{ 1 +\frac{d}{ 2 \alpha}} t^{\frac{d}{2\alpha}+1} \notag \\
&\le  C \frac{[\ln (e+t)]^{k-1}}{e+t}(1 + t) ^{- \left( \frac{d}{2} + q^{\ast} \right)}  \, + C \frac{[\ln (e+t)]^{k-2-\frac{d}{2 \alpha}}}{e+t}.
\end{align}
Now we integrate on both sides. Since, for $ k > 1$
\[
\int _0 ^t \frac{\left[\ln (e+s) \right]^{k-1}}{e+s}  (1 + s) ^{- \left( \frac{d}{2} + q^{\ast} \right)} \, ds  \leq C \int _1 ^{\ln (e+s)} z^{k-1} \, e^{-(\frac{d}{2}+ q^{\ast})z} dz \leq C,
\]
the crucial term is
\begin{displaymath}
\int_0 ^t  \frac{[\ln (e+s)]^{k-2-\frac{d}{2 \alpha}}}{e+s} \, ds = \int_1 ^{\ln (e+t)} z ^{k - 2 - \frac{d}{2 \alpha}} \, dz \leq C [\ln (e+t)] ^ {k - 1 - \frac{d}{2 \alpha}},
\end{displaymath}
where we have chosen $k$ large enough such that  $k - 1 - \frac{d}{2 \alpha} > 0$.  Thus,
the whole bound obtained after integrating is
\begin{align}
\label{eqn:first-estimate}
 \Vert u (t) \Vert _{\dot{H} ^1} ^2  & \leq C [\ln (e+t)]^{-k}+ C [\ln (e+t)] ^ {- \left( 1 +  \frac{d}{2 \alpha} \right)}  \nonumber \\&  \leq C [\ln (e+t)] ^ {- \left( 1 + \frac{d}{2 \alpha} \right)}=C [\ln (e+t)] ^ {- 2},
\end{align}
where in the last inequality we took $2 \alpha = d$.  This is our preliminary decay,  which is valid for any $0 < \gamma < 2$ and any $d \geq 5$,  because $\gamma$ appeared only in   $\|u(s)\|_{\dot{H}^1}^{2^*(\gamma)-1}$.   We will use this estimate to bootstrap and obtain tighter decay bounds.

\subsection{Bootstrap: setting}
\label{bootstrap-section}
As
\begin{displaymath}
2 < 2 \left( 2^*(\gamma)-1 \right) < 2 \frac{d+2}{d - 2},
\end{displaymath}
we can rewrite
\begin{displaymath}
2 \left( 2^*(\gamma)-1 \right) = 2 + \left( 2 \left( 2^*(\gamma)-1 \right) - 2\right) = 2 + 4 \frac{2 - \gamma}{d - 2}.
\end{displaymath}
Then, writing
\begin{equation}
\label{eqn:log-decay-gronwall}
\|u(s)\|_{\dot{H}^1}^{2 \left( 2^*(\gamma)-1 \right)} = \|u(s)\|_{\dot{H}^1}^{2 + 2 \frac{4 - 2 \gamma}{d - 2}} = \|u(s)\|_{\dot{H}^1}^{2} \|u(s)\|_{\dot{H}^1}^{ 2 \frac{4 - 2 \gamma}{d - 2}},
\end{equation}
by using \eqref{eqn:first-estimate} we arrive at
\begin{equation} \label{eqn:log-decay-gronwall2}
\|u(s)\|_{\dot{H}^1}^{2 \left( 2^*(\gamma)-1 \right)}
\leq C \|u(s)\|_{\dot{H}^1}^2 \, [\ln (e+s)] ^ {-4 \frac{2 -  \gamma}{d - 2}}.
\end{equation}
 Now let $g(t) = C (t+1) ^m$,  for a large enough $m$ to be chosen later. Then,  using  \eqref{eqn:log-decay-gronwall2} in \eqref{eqn:main-estimate} gives us
\begin{align}
\frac{d}{dt}   \left((t+1) ^m \Vert u (t) \Vert _{\dot{H} ^1} ^2 \right)  & \leq C (t+1) ^{m-1} (t + 1) ^{- \left( \frac{d}{2} + q^{\ast} \right)}  \notag \\ & + C (t+1) ^{m-1} \, (t+1)^{- \left( 1+\frac{d}{2 \alpha} \right)} \, t^{\frac{d}{2\alpha}} \int_0 ^t \|u(s)\|_{\dot{H}^1}^2 \, [\ln (e+s)] ^ {- 4 \frac{2 -  \gamma}{d - 2}} \, ds  .
\end{align}
Assume that  $m > \max\{\frac{d}{2} + q^{\ast}, 1\}$ so, after integrating, we arrive at
\begin{align} \label{eq:bootstrap}
\Vert u (t) \Vert _{\dot{H} ^1} ^2 & \leq C (t+1) ^{-m} + C (t + 1) ^{- \left( \frac{d}{2} + q^{\ast} \right)} + C (t+1) ^{-1} \int_0 ^t \|u(s)\|_{\dot{H}^1}^2 \, [\ln (e+s)] ^ {- 4 \frac{2 -  \gamma}{d - 2}} \, ds \notag\\
& \leq C (t + 1) ^{- \left( \frac{d}{2} + q^{\ast} \right)} + C (t+1) ^{-1} \int_0 ^t \|u(s)\|_{\dot{H}^1}^2 \, [\ln (e+s)] ^ {- 4 \frac{2 -  \gamma}{d - 2}} \, ds.
\end{align}

To use an appropriate Gronwall inequality,  we consider two different cases, $4 \frac{2 -  \gamma}{d - 2} > 1$ and $4 \frac{2 -  \gamma}{d - 2} =  1$ and  estimate each case separately.    When $4 \frac{2 -  \gamma}{d - 2} < 1$ we do not obtain an improvement in decay,  thus the decay rate is given by \eqref{eqn:first-estimate}.

Recall that
\begin{equation*}
\int_s ^t (r + 1) ^{-1} \, [\ln (e+r)] ^{- \nu} \, dr  \leq
\begin{cases}
C,  & \text{ if } \nu > 1, \\
C\ln \left( \ln (e+t) \right), & \text { if } \nu = 1, \\
C [\ln (e+t)] ^{1 - \nu}, & \text { if } \nu < 1,
\end{cases}
\end{equation*}
which leads to
\begin{equation*}
\exp \left( \int_s ^t (r + 1) ^{-1} \, [\ln (e+r)] ^{- \nu} \, dr \right)  \leq
\begin{cases}
C,  & \text{ if } \nu > 1, \\
[\ln (e+t)]^C, & \text { if } \nu = 1, \\
\exp \left( C[\ln (e+t)] ^{1 - \nu} \right), & \text { if } \nu < 1.
\end{cases}
\end{equation*}

\subsection{Bootstrap: 1st case  \texorpdfstring{\ensuremath{4 \frac{2 -  \gamma}{d - 2} > 1}}{4 (2 -  gamma)/(d - 2) > 1}}
We first suppose $q^{\ast} >1-\frac{d}{2}$.   Then consider
\begin{align}
x(t) & =  \Vert u (t) \Vert _{\dot{H} ^1} ^2,  \quad a(t) = C (1 + t) ^{- \left( \frac{d}{2} + q^{\ast} \right)} \notag \\ b(t) & = C(1+t)^{-1},  \quad k(t) = \frac{1}{[\ln (e+t)]^{4 \frac{2 -  \gamma}{d - 2}}}.  \notag
\end{align}
As  $4 \frac{2 -  \gamma}{d - 2} > 1$,  we have
\begin{align}
\label{eqn:integral-kb}
\int_s ^t b(r) \, k(r) \, dr & = C\int _s ^t \frac{1}{(1 + r) [\ln (e+r)]^{4 \frac{2 -  \gamma}{d - 2}}} \, dr  \leq C,  \notag \\ \int_0 ^t a(s) \, k(s) \, ds & =C \int _0 ^t \frac{1}{(1 + s) ^{\frac{d}{2} + q^{\ast}} [\ln (e+s)]^{4 \frac{2 -  \gamma}{d - 2}}} \, ds  \leq C,
\end{align}
and thus applying Proposition \ref{gronwall-1} in \eqref{eq:bootstrap} yields a faster decay
\begin{equation}\label{firstdecay}
\Vert u (t) \Vert _{\dot{H} ^1} ^2  \leq  C (1 + t) ^{- \left( \frac{d}{2} + q^{\ast} \right)} + C (1+t)^{-1} \leq  C (1+t)^{-1}.
\end{equation}

Now,  consider $q^{\ast} \leq 1-\frac{d}{2}$.  We rewrite \eqref{eq:bootstrap} as
\begin{equation}
(t + 1) \Vert u (t) \Vert _{\dot{H} ^1} ^2 \leq C (t + 1) ^{1 - \left( \frac{d}{2} + q^{\ast} \right)} + C  \int_0 ^t \frac{(s+1) \|u(s)\|_{\dot{H}^1}^2}{(s+1) [\ln (e+s)] ^ {4 \frac{2 -  \gamma}{d - 2}}} \, ds.
\end{equation}
So,  if
\begin{displaymath}
\psi (t) = (1 + t)  \Vert u (t) \Vert _{\dot{H} ^1} ^2,  \quad a(t) = C (1 + t) ^{1 - \left( \frac{d}{2} + q^{\ast} \right)},
\end{displaymath}
\begin{displaymath} 
b(t) = \frac{ C}{(1 + t)  [\ln (e+t)]^{4 \frac{2 -  \gamma}{d - 2}}},
\end{displaymath}
the fact that $b(s)$ is such that

\begin{displaymath}
\int_r ^t b(s) \, ds \leq C,
\end{displaymath}
and $q^{\ast} \leq 1-\frac{d}{2}$ lead to
\begin{displaymath}
(1 + t)  \Vert u (t) \Vert _{\dot{H} ^1} ^2  \leq  C (1 + t) ^{1 - \left( \frac{d}{2} + q^{\ast} \right)},
\end{displaymath}
where we used Proposition \ref{gronwall-2}.

Hence, together with \eqref{firstdecay}, there follows
\begin{displaymath}
 \Vert u (t) \Vert _{\dot{H} ^1} ^2  \leq  C (1 + t) ^{-\min \{ \frac{d}{2} + q^{\ast} ,1\}}.
\end{displaymath}

\subsection{Bootstrap: 2nd case \texorpdfstring{\ensuremath{4 \frac{2 -  \gamma}{d - 2} = 1}}{4 (2 -  gamma)/(d - 2) = 1}}
Once again, we first suppose $q^{\ast} >1-\frac{d}{2}$.   Then consider
\begin{align}
x(t) & =  \Vert u (t) \Vert _{\dot{H} ^1} ^2,  \quad a(t) = C (1 + t) ^{- \left( \frac{d}{2} + q^{\ast} \right)} \notag \\ b(t) & = C(1+t)^{-1},  \quad k(t) = \frac{1}{\ln (e+t)}.  \notag
\end{align}
We now have
\begin{align}
\int_s ^t b(r) \, k(r) \, dr & = C\int _s ^t \frac{1}{(1 + r) \ln (e+r)} \, dr  \leq C \ln \ln (e+t),  \notag \\ \int_0 ^t a(s) \, k(s) \, ds & =C \int _0 ^t \frac{1}{(1 + s) ^{\frac{d}{2} + q^{\ast}} \ln (e+s)} \, ds  \leq C,
\end{align}
and thus applying Proposition \ref{gronwall-1} in \eqref{eq:bootstrap} yields 
\begin{displaymath}
\Vert u(t) \Vert _{\dot{H}^1} ^2  \leq C (1 + t) ^{- \left( \frac{d}{2} + q^{\ast} \right)}  + C (t+1) ^{-1}[\ln(e+t)]^C
 \leq C (t+1) ^{-1}[\ln(e+t)]^C.
\end{displaymath}
We will use this decay to bootstrap again.
So, we go back to \eqref{eqn:log-decay-gronwall} and use  the previous estimate to have 
\begin{equation}
\|u(s)\|_{\dot{H}^1}^{2 \left( 2^*(\gamma)-1 \right)} = \|u(s)\|_{\dot{H}^1}^{2 + 1} \leq C \|u(s)\|_{\dot{H}^1}^2 \, (1 + s) ^{-\frac{1}{2}} [\ln (e+s)]^\frac{C}{2}.
\end{equation}
We proceed as before, arriving at
\begin{displaymath}
\Vert u (t) \Vert _{\dot{H} ^1} ^2 \leq C (t + 1) ^{- \left( \frac{d}{2} + q^{\ast} \right)} + C (t+1) ^{-1} \int_0 ^t \|u(s)\|_{\dot{H}^1}^2 \,  (1 + s) ^{-\frac{1}{2}} [\ln (e+s)]^\frac{C}{2}  \, ds.
\end{displaymath}
Then consider
\begin{align}
x(t) & =  \Vert u (t) \Vert _{\dot{H} ^1} ^2,  \quad a(t) = C (1 + t) ^{- \left( \frac{d}{2} + q^{\ast} \right)} \notag \\ b(t) & = C(1+t)^{-1},  \quad k(t) = \frac{[\ln (e+t)]^\frac{C}{2}}{(1+t)^{\frac{1}{2}}}.  \notag
\end{align}
Note that as $q^{\ast}  > 1-\frac{d}{2}$
\begin{align}
\int_s ^t b(r) \, k(r) \, dr & = C\int _s ^t \frac{[\ln (e+r)]^\frac{C}{2}}{(1 + r)^{\frac{3}{2}}} \, dr  \leq C ,  \notag \\ \int_0 ^t a(s) \, k(s) \, ds & =C \int _0 ^t \frac{[\ln (e+s)]^\frac{C}{2}}{(1 + s) ^{\frac{1}{2}+\frac{d}{2} + q^{\ast}} } \, ds  \leq C.
\end{align}
Hence, Proposition \ref{gronwall-1} gives us
\begin{equation}\label{fasterdecay}
\Vert u(t) \Vert _{\dot{H}^1} ^2  \leq C (1 + t) ^{- \left( \frac{d}{2} + q^{\ast} \right)}  + C (t+1) ^{-1}  \leq C (t+1) ^{-1}.
\end{equation}

Now assume $q^{\ast}  \leq 1-\frac{d}{2}$.  As in the previous case,  rewrite \eqref{eq:bootstrap} as
\begin{equation}
(t + 1) \Vert u (t) \Vert _{\dot{H} ^1} ^2 \leq C (t + 1) ^{1 - \left( \frac{d}{2} + q^{\ast} \right)} + C  \int_0 ^t \frac{(s+1) \|u(s)\|_{\dot{H}^1}^2}{(s+1) \ln (e+s)} \, ds.
\end{equation}
So,  if
\begin{displaymath}
\psi (t) = (1 + t)  \Vert u (t) \Vert _{\dot{H} ^1} ^2,  \quad a(t) = C (1 + t) ^{1 - \left( \frac{d}{2} + q^{\ast} \right)}, 
\end{displaymath}
\begin{displaymath}
b(t) = \frac{ C}{(1 + t)  \ln (e+t)},
\end{displaymath}
the fact that $b(s)$ is such that
\begin{displaymath}
\int_r ^t b(s) \, ds \leq C \ln (\ln (e+t)),
\end{displaymath}
and $q^{\ast} \leq 1-\frac{d}{2}$ lead,  through Proposition \ref{gronwall-2} to
\begin{displaymath}
\Vert u (t) \Vert _{\dot{H} ^1} ^2  \leq  C (1 + t) ^{- \left( \frac{d}{2} + q^{\ast} \right)} [\ln (e+t)]^C.
\end{displaymath}
We will use this now to bootstrap again.
Go all the way back to \eqref{eqn:log-decay-gronwall}.   Using the estimate just obtained we get
\begin{equation}
\|u(s)\|_{\dot{H}^1}^{2 \left( 2^*(\gamma)-1 \right)} = \|u(s)\|_{\dot{H}^1}^{2 + 1} \leq C \|u(s)\|_{\dot{H}^1}^2 \, \left( (1 + t) ^{- \left( \frac{d}{2} + q^{\ast} \right)} [\ln (e+t)]^C \right) ^{\frac{1}{2}}.
\end{equation}
We proceed as before, and we obtain
\begin{displaymath}
\Vert u (t) \Vert _{\dot{H} ^1} ^2 \leq C (t + 1) ^{- \left( \frac{d}{2} + q^{\ast} \right)} + C (t+1) ^{-1} \int_0 ^t \|u(s)\|_{\dot{H}^1}^2 \, \left( (1 + s) ^{- \left( \frac{d}{2} + q^{\ast} \right)} [\ln (e+s)]^C \right) ^{\frac{1}{2}} \, ds,
\end{displaymath}
which we turn into
\begin{displaymath}
(1+t) \Vert u (t) \Vert _{\dot{H} ^1} ^2 \leq C (t + 1) ^{1 - \left( \frac{d}{2} + q^{\ast} \right)} + C \int_0 ^t \frac{(1+s) \|u(s)\|_{\dot{H}^1}^2 [\ln (e+s)]^{\frac{C}{2}}}{  (1 + s) ^{1+\frac{1}{2} \left( \frac{d}{2} + q^{\ast} \right)}  } \, ds.
\end{displaymath}
Take
\begin{displaymath}
\psi (t) = (1 + t)  \Vert u (t) \Vert _{\dot{H} ^1} ^2,  \quad a(t) = C (1 + t) ^{1 - \left( \frac{d}{2} + q^{\ast} \right)}, 
\end{displaymath}
\begin{displaymath}
b(t) = C \frac{[\ln(e+t)]^{\frac{C}{2}}}{(1 + t) ^{1 + \frac{1}{2} \left(\frac{d}{2} + q^{\ast}  \right)}}.
\end{displaymath}
Then $b(s)$ is such that

\begin{displaymath}
\int_r ^t b(s) \, ds \leq C,
\end{displaymath}
so from Proposition \ref{gronwall-2} we obtain

\begin{displaymath}
(1 + t)  \Vert u (t) \Vert _{\dot{H} ^1} ^2 \leq C (1 + t) ^{1 - \left( \frac{d}{2} + q^{\ast} \right)}.
\end{displaymath}
Plugging with \eqref{fasterdecay} furnishes 
\begin{displaymath}
 \Vert u (t) \Vert _{\dot{H} ^1} ^2  \leq  C (1 + t) ^{-\min \{ \frac{d}{2} + q^{\ast} ,1\}}.
\end{displaymath}
This finishes the proof.  $\Box$

\appendix

\section{}
\label{appendix-technical}

For the reader's convenience, we recall here 
 the existence result for mild solutions to \eqref{eq:HS} obtained by Chikami, Ikeda and Taniguchi \cite[Prop 2.5]{MR4331259}.

  Let $T \in (0,\infty]$, $q\in [1,\infty]$, and $\alpha\in \RR$. The space $\mathcal{K}^{q,\alpha}(T)$ is defined by
\[
   \mathcal{K}^{q,\alpha}(T):=\left\{u\in \mathcal{D}'((0,T)\times\mathbb R^d)\ ;\ \|u\|_{\mathcal{K}^{q,\alpha}(T')}
   <\infty\ \text{for any }T' \in (0,T)\right\}
\]
endowed with the norm
\[
\|u\|_{\mathcal K^{q,\alpha}(T)}
	=\sup_{0\le t\le T}t^{\frac{d}{2}(\frac{1}{q_c}-\frac{1}{q})+\alpha}\|u\|_{L^q},
\]
where $ \mathcal{D}'((0,T)\times\mathbb R^d)$ is the space of distributions on $[0,T)\times\mathbb R^d$.
We simply write $\mathcal{K}^{q}(T)=\mathcal{K}^{q,0}(T)$ when $\alpha=0$, and $\mathcal{K}^{q,\alpha}=\mathcal{K}^{q,\alpha}(\infty)$ and $\mathcal{K}^{q}=\mathcal{K}^{q}(\infty)$
when $T=\infty$.

\begin{proposition}[Well-posedness in the energy space]
\label{prop:wellposed1} Let $ d \geq 3 $ and $0 \leq \gamma < 2$.
Assume that $q\in (1,\infty)$ satisfies
\begin{equation} \label{l:crtHS.nonlin.est:c2}
	\frac{1}{q_c} - \frac{1}{d(2^*(\gamma)-1)}
	< \frac{1}{q} < \frac{1}{q_c}.
\end{equation}
Then, the following statements hold:
\begin{itemize}
\item[(i)] (Existence)
For any $u_0 \in \dot{H}^1(\RR^d)$, there exists a maximal existence time $T_m=T_m(u_0)\in (0,\infty]$ such that
there exists a unique mild solution
\[
	u\in C([0,T_m); \dot{H}^1(\RR^d))\cap \mathcal K^q(T_m)
\]
to \eqref{eq:HS} with $u(0)=u_0$. Moreover, the solution $u$ satisfies
\[
\|u\|_{\mathcal K^{\tilde{q}}_{\tilde{r}}(T)} =
	\left(
		\int_0^T (t^{\kappa} \|u(t)\|_{L^{\tilde{q}}})^{\tilde{r}} \, dt
	\right)^\frac{1}{\tilde{r}} < \infty
\]
for any $T \in (0,T_m)$ and for any $\tilde{q}, \tilde{r} \in [1,\infty]$ satisfying \eqref{l:crtHS.nonlin.est:c2} and
\begin{equation}\label{condi-new}
	0\le  \frac1{\tilde r}  < \frac{d}2 \left(\frac1{q_c} - \frac1{\tilde q} \right),
\end{equation}
where $\kappa$ is given by
\[
\kappa = \kappa(\tilde{q},\tilde{r}) = \frac{d}{2} \left( \frac{1}{q_c} - \frac{1}{\tilde q}\right) - \frac{1}{\tilde r}.
\]

\item[(ii)] (Uniqueness in $\mathcal{K}^{q}(T)$)
Let $T>0.$ If $u_1, u_2 \in \mathcal{K}^{q}(T)$
satisfy the integral equation \eqref{integral-equation}
with $u_1(0)=u_2(0)=u_0$, then $u_1=u_2$ on $[0,T].$

\item[(iii)] (Continuous dependence on initial data)
The map $T_m : \dot{H}^1(\RR^d) \to (0,\infty]$ is
lower semicontinuous. Furthermore, for any $u_0, v_0 \in \dot{H}^1(\RR^d)$ and for any $T < \min\{T_m(u_0),T_m(v_0)\}$, there exists a constant $C>0$, depending on $\|u_0\|_{\dot H^1}$, $\|v_0\|_{\dot H^1}$, and $T$, such that
\[
	\sup_{t \in [0,T]} \|u(t) - v(t)\|_{\dot H^1}
	+
	\|u-v\|_{\mathcal K^q(T)} \le C \|u_0 - v_0\|_{\dot H^1}.
\]

\item[(iv)] (Blow-up criterion)
If $T_m < + \infty,$ then $\|u\|_{\mathcal{K}^q(T_m)} = \infty.$

\item[(v)] (Small-data global existence and dissipation)
There exists $\rho >0$ such that
if $u_0 \in \dot{H}^1(\RR^d)$ satisfies
$\|e^{t\Delta} u_0\|_{\mathcal{K}^{q}}\le \rho$,
then $T_m=+\infty$ and
\[
\|u\|_{\mathcal{K}^{q}} \le 2\rho \quad \text{and}\quad \lim_{t\to\infty}\|u(t)\|_{\dot H^1} = 0.
\]

\item[(vi)] (Dissipation of global solutions)
The following statements are equivalent:
\begin{itemize}
	\item[(a)] $T_m=+\infty$ and $\|u\|_{\mathcal K^q} < \infty$.
	\item[(b)] $\lim_{t\to T_m} \|u(t)\|_{\dot H^1}=0$.
	\item[(c)] $\lim_{t\to T_m} t^{\frac{d}{2} (\frac{1}{q_c}-\frac{1}{q})} \|u(t)\|_{L^{q}}=0$.
\end{itemize}

\item[(vii)] Let $d=3$. Suppose that $q$ satisfies the additional assumption
\[	\frac{1}{q_c} - \frac{1}{24}
	< \frac{1}{q}.\]
Then, for any $u_0 \in \dot H^1(\RR^3)$, there exists a maximal existence time $T_m=T_m(u_0)\in (0,\infty]$ such that
there exists a unique mild solution
\[
	u\in C([0,T_m); \dot{H}^1(\RR^d))\cap \mathcal K^q(T_m)\quad \text{and}\quad \partial_t u \in \mathcal K^{3,1}(T_m)
\]
to \eqref{eq:HS} with $u(0)=u_0$. Furthermore, the solution $u$ satisfies
\[
	\partial_t u \in \mathcal K^{2,1}(T_m).
\]
\end{itemize}
\end{proposition}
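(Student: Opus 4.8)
The plan is to prove Proposition~\ref{prop:wellposed1} by a Banach fixed-point argument for the Duhamel map
\[
\Phi[u](t) := e^{t\Delta}u_0 + \int_0^t e^{(t-\tau)\Delta}\bigl(|x|^{-\gamma}|u(\tau)|^{2^*(\gamma)-2}u(\tau)\bigr)\,d\tau
\]
attached to \eqref{integral-equation}, carried out in the scaling-critical space $\mathcal{K}^q(T)$, in the spirit of the Kato--Weissler theory for semilinear heat equations. Write $p:=2^*(\gamma)$. The first ingredient is a package of \emph{linear estimates}: from the heat smoothing $\|e^{t\Delta}f\|_{L^q}\le Ct^{-\frac d2(\frac1{q_c}-\frac1q)}\|f\|_{L^{q_c}}$ together with the Sobolev embedding $\dot{H}^1\hookrightarrow L^{q_c}$ one gets $\|e^{t\Delta}u_0\|_{\mathcal{K}^q}\le C\|u_0\|_{\dot{H}^1}$, and, more generally, the $\mathcal{K}^{\tilde q}_{\tilde r}$ bounds of (i) under \eqref{condi-new}; a density argument (approximating $u_0$ in $L^{q_c}$ by Schwartz functions) then gives the crucial smallness fact $\lim_{T\to 0^+}\|e^{t\Delta}u_0\|_{\mathcal{K}^q(T)}=0$, as well as $\|e^{t\Delta}\Lambda u_0\|_{L^2}\to 0$ as $t\to\infty$.

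The heart of the proof --- and the step I expect to be the main obstacle --- is the \emph{nonlinear estimate}
\[
\Bigl\|\int_0^t e^{(t-\tau)\Delta}\bigl(|x|^{-\gamma}|u|^{p-2}u\bigr)\,d\tau\Bigr\|_{\mathcal{K}^q(T)}\le C\,\|u\|_{\mathcal{K}^q(T)}^{p-1},
\]
together with its Lipschitz counterpart obtained from $\bigl||u|^{p-2}u-|v|^{p-2}v\bigr|\le C(|u|^{p-2}+|v|^{p-2})|u-v|$. I would prove this by fixing $t$, bounding $\|e^{(t-\tau)\Delta}(|x|^{-\gamma}g)\|_{L^q}$ with $g=|u(\tau)|^{p-1}$ by means of the weighted heat-semigroup inequalities of Proposition~\ref{prop:linear-main}/Corollary~\ref{cor:linear-main} (which replace the ordinary $L^{q_1}$–$L^{q_2}$ smoothing unavailable here), then applying the generalized Hölder inequality (Lemma~\ref{lem:Holder}) to write $\|g\|_{L^{q_1}}=\|u(\tau)\|_{L^{q_1(p-1)}}^{p-1}$ and choosing the intermediate exponents so that $\|u(\tau)\|_{L^{q_1(p-1)}}$ is controlled by the $\mathcal{K}^q$-norm of $u$ (possibly after interpolating with the space-time norms $\mathcal{K}^{\tilde q}_{\tilde r}$ of (i)); the remaining time integral $\int_0^t (t-\tau)^{-\sigma}\tau^{-(p-1)\theta}\,d\tau$ is a Beta-function integral that converges and scales like $t^{1-\sigma-(p-1)\theta}$, and scale invariance of \eqref{eq:HS} forces $1-\sigma-(p-1)\theta=-\frac d2(\frac1{q_c}-\frac1q)$, so the output lands back in $\mathcal{K}^q$. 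The precise exponent constraints $\sigma<1$, $(p-1)\theta<1$ and finiteness of the Beta integral are exactly what the hypotheses \eqref{l:crtHS.nonlin.est:c2} and \eqref{condi-new} on $q$ encode; since $|x|^{-\gamma}$ belongs to no Lebesgue space --- only to $L^{d/\gamma,\infty}$ --- reconciling these two competing integrability requirements (near $\tau=t$ and near $\tau=0$) is the genuinely delicate point, with everything else a routine bootstrap around this single inequality.

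With these two estimates in hand, the structural consequences follow by the standard machinery. On the ball $\{u\in\mathcal{K}^q(T):\|u\|_{\mathcal{K}^q(T)}\le R\}$ with $R=2\|e^{t\Delta}u_0\|_{\mathcal{K}^q(T)}$, for $T$ small enough --- or, when $T=\infty$, for $\|e^{t\Delta}u_0\|_{\mathcal{K}^q}\le\rho$ small, which already yields (v) --- the smallness fact makes $R^{p-2}$ small, so $\Phi$ is a self-map and a contraction and produces the unique fixed point $u\in\mathcal{K}^q(T)$. One then upgrades to $u\in C([0,T^\prime];\dot{H}^1)$ by estimating $\Lambda\Phi[u]$: the linear part $\Lambda e^{t\Delta}u_0$ is continuous into $L^2$ with value $\Lambda u_0$ at $t=0$, while the Duhamel part is bounded in $\dot{H}^1$ exactly as above and shown to vanish as $t\to0^+$. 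Defining $T_m$ as the supremum of existence times, if $T_m<\infty$ while $\|u\|_{\mathcal{K}^q(T_m)}<\infty$ one restarts the fixed point near $T_m$ and extends, a contradiction, which gives the blow-up criterion (iv); uniqueness in $\mathcal{K}^q(T)$ (ii) follows because the difference of two solutions satisfies $\|u_1-u_2\|_{\mathcal{K}^q(\tau)}\le\varepsilon(\tau)\|u_1-u_2\|_{\mathcal{K}^q(\tau)}$ with $\varepsilon(\tau)\to0$ as $\tau\to0$, forcing coincidence on a short interval and then by continuation; and the same difference estimate combined with a continuity-in-$T$/Gronwall argument yields the continuous-dependence bound and lower semicontinuity of $T_m$ in (iii).

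For the dissipation statements, under the global smallness hypothesis the fixed point already lies in $\mathcal{K}^q=\mathcal{K}^q(\infty)$ with $\|u\|_{\mathcal{K}^q}\le2\rho$, and $\|u(t)\|_{\dot{H}^1}\to0$ because $\|e^{t\Delta}\Lambda u_0\|_{L^2}\to0$ while the $\dot{H}^1$ norm of the Duhamel term at time $t$ is bounded by $\|u\|_{\mathcal{K}^q((t/2,\infty))}$ plus a decaying tail, both tending to $0$; this proves (v) and the implication (a)$\Rightarrow$(b) of (vi), and the remaining equivalences (b)$\Leftrightarrow$(c) and (c)$\Rightarrow$(a) come from comparing $t^{\frac d2(\frac1{q_c}-\frac1q)}\|u(t)\|_{L^q}$ with the $\dot{H}^1$ bound and invoking (iv). Finally, for $d=3$ in (vii) one differentiates \eqref{integral-equation}, getting $\partial_t u=\Delta e^{t\Delta}u_0+|x|^{-\gamma}|u|^{p-2}u+\int_0^t\Delta e^{(t-\tau)\Delta}(|x|^{-\gamma}|u|^{p-2}u)\,d\tau$, and estimates each term in $\mathcal{K}^{3,1}(T_m)$ and then $\mathcal{K}^{2,1}(T_m)$ via the weighted heat estimates, the sharpened lower bound on $1/q$ being exactly what keeps the resulting $\tau$-exponents integrable.
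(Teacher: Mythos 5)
The paper does not actually prove Proposition \ref{prop:wellposed1}: it is quoted verbatim from Chikami, Ikeda and Taniguchi \cite{MR4331259} (their Proposition 2.5) and recorded in the appendix only for the reader's convenience. Your sketch follows essentially the same route as that reference --- a contraction-mapping argument for the Duhamel map in the scaling-critical Kato space $\mathcal{K}^q(T)$, with the singular weight $|x|^{-\gamma}$ absorbed through the weighted Lorentz-space heat-semigroup estimates of Proposition \ref{prop:linear-main}/Corollary \ref{cor:linear-main} and the generalized H\"older inequality, the condition \eqref{l:crtHS.nonlin.est:c2} emerging precisely from convergence of the resulting Beta integral --- so the only caveat is that your central nonlinear estimate is outlined rather than executed, though the exponent bookkeeping you describe does close under the stated hypotheses.
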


The definition of $ \mathcal{K}^q$, combined with standard estimates for the heat semigroup and the Sobolev Embedding Theorem, implies that
\begin{displaymath}
 \| e^{t\Delta} u_0 \|_{\mathcal{K}^q} \leq C \Vert u_0 \Vert _{L ^{q_c}} \leq C \Vert u_0 \Vert _{\dot{H}^1},
\end{displaymath}
which, in turn, leads to the existence of a global solution for sufficiently small
$\Vert u_0 \Vert _{\dot{H}^1}$.


\begin{thebibliography}{10}

\bibitem{MR979040}
Angelo Alvino, Guido Trombetti, and Piere~L. Lions.
\newblock On optimization problems with prescribed rearrangements.
\newblock {\em Nonlinear Anal.}, 13(2):185--220, 1989.

\bibitem{MR2768550}
Hajer Bahouri, Jean-Yves Chemin, and Rapha\"{e}l Danchin.
\newblock {\em Fourier analysis and nonlinear partial differential equations},
  volume 343 of {\em Grundlehren der mathematischen Wissenschaften [Fundamental
  Principles of Mathematical Sciences]}.
\newblock Springer, Heidelberg, 2011.

\bibitem{MR1171448}
Drumi Ba\u{\i}nov and Pavel Simeonov.
\newblock {\em Integral inequalities and applications}, volume~57 of {\em
  Mathematics and its Applications (East European Series)}.
\newblock Kluwer Academic Publishers Group, Dordrecht, 1992.
\newblock Translated by R. A. M. Hoksbergen and V. Covachev [V. Khr. Kovachev].

\bibitem{MR2493562}
Clayton Bjorland and Mar\'{\i}a~E. Schonbek.
\newblock Poincar\'e's inequality and diffusive evolution equations.
\newblock {\em Adv. Differential Equations}, 14(3-4):241--260, 2009.

\bibitem{MR3493117}
Lorenzo Brandolese.
\newblock Characterization of solutions to dissipative systems with sharp
  algebraic decay.
\newblock {\em SIAM J. Math. Anal.}, 48(3):1616--1633, 2016.

\bibitem{CARDENAS2022125548}
Armando~S. C\'ardenas and César~J. Niche.
\newblock Decay character and estimates for the damped wave equation.
\newblock {\em J. Math. Anal. Appl.}, 506(1):125548, 2022.

\bibitem{MR4331259}
Noboru Chikami, Masahiro Ikeda, and Koichi Taniguchi.
\newblock Well-posedness and global dynamics for the critical {H}ardy-{S}obolev
  parabolic equation.
\newblock {\em Nonlinearity}, 34(11):8094--8142, 2021.

\bibitem{CHIKAMI2022112931}
Noboru Chikami, Masahiro Ikeda, and Koichi Taniguchi.
\newblock Optimal well-posedness and forward self-similar solution for the
  {H}ardy-{H}\'enon parabolic equation in critical weighted {L}ebesgue spaces.
\newblock {\em Nonl. Analysis}, 222:112931, 2022.

\bibitem{MR4803464}
Noboru Chikami, Masahiro Ikeda, Koichi Taniguchi, and Slim Tayachi.
\newblock Unconditional uniqueness and non-uniqueness for {H}ardy-{H}\'enon
  parabolic equations.
\newblock {\em Math. Ann.}, 390(3):3765--3825, 2024.

\bibitem{MR3565380}
Lucas C.~F. Ferreira, C\'{e}sar~J. Niche, and Gabriela Planas.
\newblock Decay of solutions to dissipative modified quasi-geostrophic
  equations.
\newblock {\em Proc. Amer. Math. Soc.}, 145(1):287--301, 2017.

\bibitem{MR3052352}
Nassif Ghoussoub and Amir Moradifam.
\newblock {\em Functional inequalities: new perspectives and new applications},
  volume 187 of {\em Mathematical Surveys and Monographs}.
\newblock American Mathematical Society, Providence, RI, 2013.

\bibitem{MR2445437}
Loukas Grafakos.
\newblock {\em Classical {F}ourier analysis}, volume 249 of {\em Graduate Texts
  in Mathematics}.
\newblock Springer, New York, second edition, 2008.

\bibitem{henon1973numerical}
Michel H{\'e}non.
\newblock Numerical experiments on the stability of spherical stellar systems.
\newblock {\em Astronomy and Astrophysics, Vol. 24, p. 229 (1973)}, 24:229,
  1973.

\bibitem{Hisa2024}
Kotaro Hisa and Mikolaj Sierzega.
\newblock Existence and nonexistence of solutions to the {H}ardy parabolic
  equation.
\newblock {\em Funkcialaj Ekvacioj}, 67(2):149--174, 2024.

\bibitem{HISA2021822}
Kotaro Hisa and Jin Takahashi.
\newblock Optimal singularities of initial data for solvability of the {H}ardy
  parabolic equation.
\newblock {\em J. Differential Equations}, 296:822--848, 2021.

\bibitem{ikeda2025globaldynamicsenergycriticalnonlinear}
Masahiro Ikeda, César~J. Niche, and Gabriela Planas.
\newblock Global dynamics for the energy-critical nonlinear heat equation.
\newblock {\em arXiv:2412.04238}, 2024.

\bibitem{10.57262/ade030-0304-141}
Michinori Ishiwata, Bernhard Ruf, Federica Sani, and Elide Terraneo.
\newblock {Blow-up and global solutions for subcritical and critical parabolic
  equations in ${\mathbb R}^N$}.
\newblock {\em Adv. Differential Equations}, 30(3/4):141 -- 176, 2025.

\bibitem{MR2257393}
Carlos~E. Kenig and Frank Merle.
\newblock Global well-posedness, scattering and blow-up for the
  energy-critical, focusing, non-linear {S}chr\"{o}dinger equation in the
  radial case.
\newblock {\em Invent. Math.}, 166(3):645--675, 2006.

\bibitem{MR2461508}
Carlos~E. Kenig and Frank Merle.
\newblock Global well-posedness, scattering and blow-up for the energy-critical
  focusing non-linear wave equation.
\newblock {\em Acta Math.}, 201(2):147--212, 2008.

\bibitem{MR4743818}
Leonardo Kosloff, C\'{e}sar~J. Niche, and Gabriela Planas.
\newblock Decay rates for the 4{D} energy-critical nonlinear heat equation.
\newblock {\em Bull. Lond. Math. Soc.}, 56(4):1468--1482, 2024.

\bibitem{MR1938147}
Pierre~G. Lemari\'{e}-Rieusset.
\newblock {\em Recent developments in the {N}avier-{S}tokes problem}, volume
  431 of {\em Chapman \& Hall/CRC Research Notes in Mathematics}.
\newblock Chapman \& Hall/CRC, Boca Raton, FL, 2002.

\bibitem{MR1190927}
Dragoslav~S. Mitrinovi\'{c}, Josip~E. Pe\v{c}ari\'{c}, and A.~M. Fink.
\newblock {\em Inequalities involving functions and their integrals and
  derivatives}, volume~53 of {\em Mathematics and its Applications (East
  European Series)}.
\newblock Kluwer Academic Publishers Group, Dordrecht, 1991.

\bibitem{MR3437593}
C\'esar~J. Niche.
\newblock Decay characterization of solutions to {N}avier-{S}tokes-{V}oigt
  equations in terms of the initial datum.
\newblock {\em J. Differential Equations}, 260(5):4440--4453, 2016.

\bibitem{MR3355116}
C\'esar~J. Niche and Mar\'\i a~E. Schonbek.
\newblock Decay characterization of solutions to dissipative equations.
\newblock {\em J. Lond. Math. Soc. (2)}, 91(2):573--595, 2015.

\bibitem{MR88624}
Franz Rellich.
\newblock Halbbeschr\"ankte {D}ifferentialoperatoren h\"oherer {O}rdnung.
\newblock In {\em Proceedings of the {I}nternational {C}ongress of
  {M}athematicians, 1954, {A}msterdam, vol. {III}}, pages 243--250. Erven P.
  Noordhoff N. V., Groningen, 1956.

\bibitem{MR240668}
Franz Rellich.
\newblock {\em Perturbation theory of eigenvalue problems}.
\newblock Gordon and Breach Science Publishers, New York-London-Paris, 1969.
\newblock Assisted by J. Berkowitz, With a preface by Jacob T. Schwartz.

\bibitem{MR571048}
Mar\'{\i}a~E. Schonbek.
\newblock Decay of solutions to parabolic conservation laws.
\newblock {\em Comm. Partial Differential Equations}, 5(7):449--473, 1980.

\bibitem{MR775190}
Mar\'{\i}a~E. Schonbek.
\newblock {$L^2$} decay for weak solutions of the {N}avier-{S}tokes equations.
\newblock {\em Arch. Rational Mech. Anal.}, 88(3):209--222, 1985.

\bibitem{MR837929}
Mar\'{\i}a~E. Schonbek.
\newblock Large time behaviour of solutions to the {N}avier-{S}tokes equations.
\newblock {\em Comm. Partial Differential Equations}, 11(7):733--763, 1986.

\end{thebibliography}
\end{document}